\documentclass[11pt]{article}

\usepackage[T2A]{fontenc}
\usepackage[cp1251]{inputenc}
\usepackage{amsthm}%%%%%%%% theorem definition
\usepackage{amsfonts}
\usepackage{amssymb}
\usepackage{amsmath}
\usepackage{cite}%%%%% citations

\makeatletter
\renewcommand{\@biblabel}[1]{#1.\hfill}
\makeatother
 \usepackage{amsthm}
 \usepackage{bm}
 \makeatletter
\renewenvironment{proof}[1][\proofname]{%
  \par\pushQED{\qed}%
  \trivlist
  \item[\hskip\labelsep \textbf{\itshape #1}]%
}{%
  \popQED\endtrivlist\@endpefalse
}
\makeatother
\begin{document}
%%%%%%%%%%%%% begin theorem definition %%%%%%%%%%%%%%%%%%
\newtheoremstyle{mytheorem}
  {\topsep}   % ABOVESPACE
  {\topsep}   % BELOWSPACE
  {\itshape}  % BODYFONT
  {}       % INDENT (empty value is the same as 0pt)
  {\bfseries} % HEADFONT
  {  }         % HEADPUNCT
  {5pt plus 1pt minus 1pt} % HEADSPACE
   { }          % CUSTOM-HEAD-SPEC
   %{\thmname{#1}\thmnumber{ #2}\thmnote{ {\normalfont(#3)}}.}
\newtheoremstyle{myremark}
  {\topsep}   % ABOVESPACE
  {\topsep}   % BELOWSPACE
  {\upshape}  % BODYFONT
  {}       % INDENT (empty value is the same as 0pt)
  {\bfseries\itshape} % HEADFONT
  {  }         % HEADPUNCT
  {5pt plus 1pt minus 1pt} % HEADSPACE
  { }          % CUSTOM-HEAD-SPEC\cite{}
   %{\thmname{#1}\thmnumber{ #2}\thmnote{ {\normalfont(#3)}}.}
\theoremstyle{mytheorem}
\newtheorem{theorem}{Theorem}[section]
 \newtheorem{theorema}{Theorem}
 \newtheorem*{SD}{Skitovich--Darmois theorem}
 \newtheorem*{I}{Ibragimov theorem}
 \newtheorem*{R}{Ramachandran theorem}
\newtheorem{proposition}[theorem]{Proposition} 
\newtheorem{lemma}[theorem]{Lemma} 
\newtheorem{corollary}[theorem]{Corollary} 
\newtheorem{definition}[theorem]{Definition} 
\theoremstyle{myremark}
\newtheorem{remark}[theorem]{Remark} 
%%%%%%%%%%%%%%%%%%%%% end theorem definition %%%%%%%%%%%%%%%%%%
\noindent This article has been accepted  for publication 

\noindent in the journal  "Potential Analysis"

\bigskip

\noindent{\textbf{\Large A Group Analogue of  the Ghurye--Olkin--Ibragimov Theorem}}

\bigskip

\noindent{\textbf{\large Gennadiy Feldman}} 
(ORCID ID  https://orcid.org/0000-0001-5163-4079)

\bigskip

\noindent{\textbf{Abstract}}

\noindent According to the classical Skitovich--Darmois theorem, the Gaussian distribution
on the real line is characterized by the independence of two linear forms of a finite
number of independent random variables $\xi_j$. This theorem has been extended in various
directions. In particular, S.G.~Ghurye and I.~Olkin established an analogous result
for the case where $\xi_j$ are $n$-dimensional independent random vectors and the
coefficients of the linear forms are invertible $n\times n$ matrices.
Subsequently, A.A.~Zinger and later I.A.~Ibragimov investigated linear forms of 
an infinite
sequence of $n$-dimensional independent random vectors. In the present paper, 
we investigate, 
for the first time, linear forms of a 
sequence of independent random variables taking values in a second-countable 
locally compact Abelian group $X$ under either of the following conditions: 
$X$ contains no nontrivial compact subgroups, or $X$ contains no subgroups 
topologically isomorphic to the circle group and has a finite topological 
automorphism group. Furthermore, we investigate the case where the 
independent random variables take values in an ${\bm a}$-adic solenoid 
$\Sigma_{\bm a}$. In all these settings, the coefficients of the linear forms 
are topological automorphisms of the corresponding group.

\bigskip

\noindent{\bf Keywords}   Skitovich--Darmois theorem $\cdot$ Probability 
distribution $\cdot$ 
Locally compact Abelian group 

\bigskip

\noindent {\bf Mathematics Subject Classification (2020)}    
43A25 $\cdot$ 43A35 $\cdot$ 60B15 $\cdot$ 62E10

\section{Introduction}

The following theorem, independently proved by V.P.~Skitovich \cite{Ski}
and G.~Darmois \cite{Dar} (see also \cite[\S 3.1]{KaLiRa}),
is one of the most well-known characterization theorems in mathematical statistics.
\begin{SD}
Let $a_j, b_j$,  $j=1, 2, \dots, n$,  $n \ge 2$, be nonzero real numbers, and let 
$\xi_j$  be real-valued 
independent random variables.  
If the linear forms
$$
L_1 = \sum_{j=1}^n a_j \xi_j \   \text{and} \   L_2 = \sum_{j=1}^n b_j \xi_j
$$
are independent, then all $\xi_j$ are Gaussian.
\end{SD}
The Skitovich--Darmois theorem has been developed in many directions. We mention a few of them.  
S.G.~Ghurye and I.~Olkin \cite{GhurO} proved a similar theorem in the case when the $\xi_j$ take values in the space $\mathbb{R}^n$,  and $a_j$, $b_j$ 
are invertible $n\times n$ matrices.  
Yu.~V.~Linnik and A.A.~Zinger \cite{LZ}, and later G.P.~Chistyakov and F.~G\"{o}tze \cite{CG}, considered linear forms with random coefficients.  
F.~Lehner \cite{Le} showed that, in general, the Skitovich--Darmois theorem does not hold in the framework of free probability.  
A.M.~Kagan \cite{Ka} proved a theorem that can be considered a generalization of the Skitovich--Darmois theorem for dependent random variables and several linear forms.  
A.M.~Kagan and G.~Sz\'{e}kely \cite{KaS} studied the case of so-called $Q$-independent random variables.  
Based on a characterization of polynomials as solution sets of certain functional
equations, J.M.~Almira~\cite{Al1} proposed a new proof of the Ghurye--Olkin theorem.
In a large number of works by different authors, the case where 
$\xi_j$ take values in a locally compact Abelian group $X$ from a certain
class, and the coefficients $a_j$ and $b_j$ are topological automorphisms of $X$, 
has been studied (see, e.g., \cite{FeGra1, Fe10, Ma1, Fe16, M2020, FeGra2}, 
as well as \cite[Chapters~IV and~V]{FeBook} and \cite[Chapter~III]{F}, 
where further references can be found). In all the works mentioned above, 
linear forms of a finite number of independent random variables were considered.

The problem of generalizing the Skitovich--Darmois theorem to linear forms
of an infinite sequence of independent random variables was posed by 
Yu.V.~Linnik \cite{Lin}.
Using the approach he proposed, this problem was first investigated
by L.V.~Mamai \cite{M} and later by B.~Ramachandran \cite[Theorem~8.2.2]{R}.
The most significant result in this direction was obtained by I.A.~Ibragimov \cite{I1}.
An analogue of this result for the space $\mathbb{R}^n$, that is, a generalization
of the Ghurye--Olkin theorem to linear forms of an infinite sequence
of independent random vectors, was proved by A.A.~Zinger \cite{Z} and subsequently
strengthened by I.A.~Ibragimov \cite{I2}.
It should be noted that all the above-mentioned results were obtained under various restrictions
on the coefficients of the linear forms.

The main result of this paper is a generalization of Ibragimov's theorem \cite{I2}
to linear forms of an infinite sequence of independent random variables taking values
in a second-countable locally compact Abelian group $X$ with no nontrivial compact subgroups.
The coefficients of the linear forms are assumed to be topological automorphisms of $X$.
We also investigate this characterization problem for certain other locally compact
Abelian groups. To the best of our knowledge, the problem of characterizing probability
distributions by the independence of linear forms of an infinite sequence of independent
random variables has not previously been studied for groups.

In the paper, we use standard results from abstract harmonic analysis 
(see, e.g., \cite{HeRo1}).  
Let $X$ be a locally compact Abelian group, and let $Y$ denote its character 
group.  For $x \in X$, we denote by $(x,y)$ the value of a character $y \in Y$ at $x$.  
We write $X^2 = X \times X$ for the direct product 
of $X$ with itself.
Let $\operatorname{Aut}(X)$ be the group of all topological automorphisms of $X$, 
and let $I$ denote the identity automorphism of $X$.  
A closed subgroup $G$ of $X$ is called {characteristic} if $\alpha(G) = G$ 
for all $\alpha \in \operatorname{Aut}(X)$.  
Let
 $\alpha\in\operatorname{Aut}(X)$. 
 The adjoint automorphism $\widetilde\alpha\in\operatorname{Aut}(Y)$
is defined by the formula $(\alpha x,
y)=(x, \widetilde\alpha y)$ for all $x\in X$, $y\in
Y$.  
Let $K$ be a closed 
subgroup of $X$. Denote by $A(Y, K) = \{y \in Y: (x, y) =1$ for all $x\in
K\}$ the annihilator of $K$. 

Let $\mu$ and $\nu$ be probability 
distributions on the group $X$. The convolution
$\mu*\nu$ is defined by the formula
$$
\mu*\nu(B)=\int\limits_{X}\mu(B-x)d \nu(x)
$$
for any Borel subset $B$ of $X$.
 Denote by
$$
\widehat\mu(y) =
\int_{X}(x, y)d \mu(x), \quad y\in Y,$$   the characteristic function 
(Fourier transform) of $\mu$.  
Define the distribution $\bar \mu $ by the formula
 $\bar \mu(B) = \mu(-B)$ for any Borel  subset $B$ of $X$.
We have $\widehat{\bar{\mu}}(y)=\overline{\widehat\mu(y)}$.  
For $x\in X$, denote by $E_x$ the Dirac mass at $x$, i.e., 
the degenerate distribution
 with support at $x$. 
 If $X$ is a compact group, denote by $m_X$ the normed Haar
 measure on $X$. 
 
 Let $X$ be a second-countable locally compact Abelian group.
 For a random variable  $\xi$ 
 taking  values in 
 $X$, denote by $\mu_\xi$ its distribution.  Let
$\xi_j$, $j = 1, 2, \dots$, be  independent random variables 
taking values in $X$.
We say that the series $\sum\limits_{j=1}^\infty \xi_j$ converges in distribution if 
there exists a random variable $L$ taking values in $X$ 
such that the sequence of partial sums 
$S_n=\sum\limits_{j=1}^n\xi_j$ 
converges in distribution\footnote{On the real line, 
convergence almost surely, in probability, and in distribution are 
equivalent for series of independent random variables.} to $L$ as $n\to \infty$, i.e.,
the distributions $\mu_{S_n}$ converge weakly to $\mu_{L}$. 
In this case, we write 
  $L=\sum\limits_{j=1}^\infty\xi_j$.
This is equivalent to saying that  the sequence of partial products
$P_n(y)=\prod\limits_{j=1}^n\widehat\mu_{\xi_j}(y)$ on 
compact subsets of the character group $Y$ converges uniformly 
to $\widehat\mu_L(y)$ as $n\to\infty$,  and we then write
  $\widehat\mu_L(y)=\prod\limits_{j=1}^\infty\widehat\mu_{\xi_j}(y)$.
We note that unlike classical infinite products in analysis, the limit of partial products of characteristic functions may vanish at some points, which does not affect convergence in distribution.

\section{Lemmas}

To prove the main theorem, we require some lemmas. 
 
\begin{lemma}\label{l1}
Let $X$ be a second-countable locally compact Abelian group with character group $Y$, 
and let $\alpha_j, \beta_j$, $j=1,2,\dots$, be topological automorphisms of $X$. 
Let $\xi_j$ be independent random variables taking values in $X$  with distributions $\mu_j$. 
Assume that
$$L_1=\sum\limits_{j=1}^\infty\alpha_j\xi_j\   \text{and} \  
 L_2=\sum\limits_{j=1}^\infty\beta_j\xi_j.$$
Then the linear forms $L_1$ and $L_2$ are independent
if and only if the characteristic functions $\widehat\mu_j(y)$ satisfy the equation
\begin{equation}
\label{e2}\prod_{j=1}^\infty\widehat\mu_j(\widetilde\alpha_j u+\widetilde\beta_j
v)=\prod_{j=1}^\infty\widehat\mu_j(\widetilde\alpha_j
u)\prod_{j=1}^\infty\widehat\mu_j(\widetilde\beta_j v), \quad u, v \in
Y, 
\end{equation}
where the infinite products in the both sides of equation~$(\ref{e2})$
 converge uniformly on each 
 compact subset of the group $Y^2$.
\end{lemma}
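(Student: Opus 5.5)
We will derive the statement from the standard criterion that two random variables $L_1,L_2$ with values in $X$ are independent if and only if the characteristic function of the pair $(L_1,L_2)$ on $X^2$ factors:
$$
\mathbf{E}\big[(L_1,u)(L_2,v)\big]=\mathbf{E}\big[(L_1,u)\big]\,\mathbf{E}\big[(L_2,v)\big],\qquad u,v\in Y .
$$
This criterion holds because the character group of $X^2$ is $Y^2$, and a probability distribution on $X^2$ is uniquely determined by its characteristic function. The right-hand side equals $\widehat\mu_{L_1}(u)\widehat\mu_{L_2}(v)$. So the task reduces to identifying both sides with the infinite products in (\ref{e2}), and to justifying the convergence claims.

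First we compute the marginal factors. For a single term we have $\mathbf{E}[(\alpha_j\xi_j,u)]=\mathbf{E}[(\xi_j,\widetilde\alpha_j u)]=\widehat\mu_j(\widetilde\alpha_j u)$, by the definition of the adjoint automorphism. Since the $\xi_j$ are independent, the partial sums $\sum_{j\le n}\alpha_j\xi_j$ have characteristic function $\prod_{j\le n}\widehat\mu_j(\widetilde\alpha_j u)$. The hypothesis that $L_1$ is the sum of the series means convergence in distribution. By the equivalence recalled in the Introduction (weak convergence is equivalent to uniform convergence of characteristic functions on compacta, the L\'evy continuity theorem on LCA groups), the partial products converge to $\widehat\mu_{L_1}(u)$ uniformly on compact subsets of $Y$. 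The same argument applies to $L_2$ with $\widetilde\beta_j$. Products of two sequences, each converging uniformly on compacta of $Y$, converge uniformly on compacta of $Y^2$, because the factors are bounded by $1$ and the projections of a compact set in $Y^2$ are compact.

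Next we treat the joint factor, which is the main point. The vector $(S_n^{(1)},S_n^{(2)})=\big(\sum_{j\le n}\alpha_j\xi_j,\sum_{j\le n}\beta_j\xi_j\big)$ converges in distribution on $X^2$ to $(L_1,L_2)$. The cleanest way to see this is to interpret the series as random variables on one probability space. The individual terms are not independent across the two forms, so we use the joint map $\xi_j\mapsto(\alpha_j\xi_j,\beta_j\xi_j)$. This gives independent $X^2$-valued summands, and we require the series $\sum_j(\alpha_j\xi_j,\beta_j\xi_j)$ to converge in distribution to $(L_1,L_2)$. I would argue that this is the intended meaning of the joint statement, since otherwise the joint law of $(L_1,L_2)$ is not determined by the marginals. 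Alternatively, one can invoke an It\^o--Nisio type fact for second-countable LCA groups: convergence in distribution of a series of independent terms is equivalent to a.s. convergence, so both series converge a.s. and hence jointly. I expect this justification to be the main obstacle, and I would first try the It\^o--Nisio route, which is known for such groups. Once joint convergence is established, the characteristic function of the summand $j$ at $(u,v)$ is
$$
\mathbf{E}\big[(\alpha_j\xi_j,u)(\beta_j\xi_j,v)\big]=\widehat\mu_j(\widetilde\alpha_j u+\widetilde\beta_j v).
$$
Independence and the continuity theorem on $X^2$ then give the uniform convergence on compacta of $Y^2$ of the left-hand product in (\ref{e2}) to the characteristic function of $(L_1,L_2)$.

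Finally we combine the pieces. Equation (\ref{e2}) states exactly that the characteristic function of $(L_1,L_2)$ equals that of $\mu_{L_1}\times\mu_{L_2}$. By the uniqueness theorem for Fourier transforms of measures on $X^2$, this holds if and only if the distribution of $(L_1,L_2)$ is the product measure, that is, if and only if $L_1$ and $L_2$ are independent. Both directions follow at once.
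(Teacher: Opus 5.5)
Your argument is the paper's argument: the characteristic-function criterion for independence on $X^2$, the identity $\mathbf{E}[(\alpha_j\xi_j,u)(\beta_j\xi_j,v)]=\widehat\mu_j(\widetilde\alpha_j u+\widetilde\beta_j v)$, and passage to the limit in the partial sums. You are also right that joint convergence of $\bigl(\sum_{j\le n}\alpha_j\xi_j,\sum_{j\le n}\beta_j\xi_j\bigr)$ in $X^2$ does not follow automatically from the two marginal hypotheses. The paper simply asserts that the left-hand product in (\ref{e2}) converges to $\widehat\mu_{(L_1,L_2)}$, and then passes to the limit in $\mathbf{E}[(\cdot,u)(\cdot,v)]$ without comment.

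However, the It\^{o}--Nisio route you say you would try first is not available for general second-countable locally compact Abelian groups, and joint convergence can in fact fail. Take $X=\mathbb{T}=\mathbb{R}/\mathbb{Z}$ and let $\xi_1$ have distribution $m_{\mathbb{T}}$. For $j\ge 2$ let $\xi_j\equiv x_j$ be constant with $x_j=(-1)^j\tfrac14$, so that $s_n=x_2+\dots+x_n$ alternates between $\tfrac14$ and $0$. Put $\alpha_j=I$ for all $j$, $\beta_1=I$, and $\beta_j=-I$ for $j\ge 2$. Both partial sums $\xi_1\pm s_n$ have distribution $m_{\mathbb{T}}$ for every $n$, so both series converge in distribution in the paper's sense. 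Yet the joint partial products equal $\widehat m_{\mathbb{T}}(u+v)\,(s_n,u-v)$, which at $(u,v)=(1,-1)$ equals $e^{4\pi i s_n}$ and alternates between $-1$ and $1$. Already i.i.d.\ $m_{\mathbb{T}}$-distributed summands show that convergence in distribution does not imply a.s.\ convergence once $X$ has nontrivial compact subgroups.

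So in this generality, joint convergence in distribution of $\sum_j(\alpha_j\xi_j,\beta_j\xi_j)$ to $(L_1,L_2)$ must be part of the hypothesis, which is your first option. With that reading your proof is complete and coincides with the paper's.
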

\begin{proof} The proof of the lemma is carried out 
as in the case of real-valued random 
variables. Let $\xi$ be a
random variable  taking values in $X$ and distribution 
$\mu$, and let $\alpha
\in \operatorname{Aut}(X)$.  The
characteristic function of the random variable  
$\alpha \xi$ is
equal to
 $\widehat \mu(\widetilde \alpha y)$.
We   note that the linear forms  $L_1$ and $L_2$
are independent if and only if the  equality $\mathbf{E}[(L_1,u)(L_2,v)]=\mathbf{E}[(L_1,u)]\mathbf{E}[(L_2,v)]$, i.e.,
\begin{equation}\label{eq1} 
 \mathbf{E}\left[\left(\sum_{j=1}^\infty\alpha_j\xi_j, u\right)
 \left(\sum_{j=1}^\infty\beta_j\xi_j,
v\right)\right]=\mathbf{ E}\left[\left(\sum_{j=1}^\infty\alpha_j\xi_j,
u\right)\right]\mathbf{ E}\left[\left(\sum_{j=1}^\infty\beta_j\xi_j, v\right)\right]
\end{equation}
holds for all  $u, v \in Y$. 
Since the series 
$\sum\limits_{j=1}^\infty\alpha_j\xi_j$ and
$\sum\limits_{j=1}^\infty\beta_j\xi_j$ 
 converge in distribution to $L_1$ and $L_2$,   
 the infinite products 
 $ 
 \prod\limits_{j=1}^\infty\widehat\mu_j(\widetilde\alpha_j
y)$ and 
$\prod\limits_{j=1}^\infty\widehat\mu_j(\widetilde\beta_j y)
$  converge uniformly on each 
 compact subset of the group $Y$ to the characteristic functions   
 $\widehat\mu_{L_1}(y)$ and  $\widehat\mu_{L_2}(y)$ respectively, and also 
 the infinite product
 $\prod\limits_{j=1}^\infty\widehat\mu_j(\widetilde\alpha_j u+\widetilde\beta_j
v)$ converges uniformly on each 
 compact subset of the group $Y^2$ to the characteristic function    
 $\widehat\mu_{(L_1, L_2)}(u, v)$. 
 
 Taking into account that the random variables $\xi_j$ are independent 
and that $\widehat\mu_j(y)=\mathbf{E}[(\xi_j, y)]$, and using the definition 
of convergence in distribution, we transform the left-hand side of equality~(\ref{e2}) 
as follows:
\begin{multline*}
\mathbf{E}\left[\left(\sum_{j=1}^\infty\alpha_j\xi_j, u\right)
 \left(\sum_{j=1}^\infty\beta_j\xi_j, v\right)\right] 
= \lim_{n \to \infty} \mathbf{E}\left[\left(\sum_{j=1}^n\alpha_j\xi_j, u\right)
 \left(\sum_{j=1}^n\beta_j\xi_j, v\right)\right] \\
= \lim_{n \to \infty} \mathbf{E}\left[\prod_{j=1}^n (\xi_j, \widetilde\alpha_j u + \widetilde\beta_j v)\right] 
= \lim_{n \to \infty} \prod_{j=1}^n \mathbf{E}[(\xi_j, \widetilde\alpha_j u + \widetilde\beta_j v)] \\
= \prod_{j=1}^\infty \widehat\mu_j(\widetilde\alpha_j u + \widetilde\beta_j v), \quad u, v \in Y.
\end{multline*}
Analogously, we transform the right-hand side of equality~(\ref{e2}) as follows:
\begin{multline*}
\mathbf{E}\left[\left(\sum_{j=1}^\infty\alpha_j\xi_j, u\right)\right]\mathbf{E}\left[\left(\sum_{j=1}^\infty\beta_j\xi_j, v\right)\right]\\
= \lim_{n \to \infty} \mathbf{E}\left[\left(\sum_{j=1}^n\alpha_j\xi_j, u\right)\right] 
\lim_{n \to \infty} \mathbf{E}\left[\left(\sum_{j=1}^n\beta_j\xi_j, v\right)\right] \\
= \lim_{n \to \infty} \mathbf{E}\left[\prod_{j=1}^n (\xi_j, \widetilde\alpha_j u)\right] 
\lim_{n \to \infty} \mathbf{E}\left[\prod_{j=1}^n (\xi_j, \widetilde\beta_j v)\right] \\
= \lim_{n \to \infty} \prod_{j=1}^n \mathbf{E}[(\xi_j, \widetilde\alpha_j u)] \lim_{n \to \infty} \prod_{j=1}^n \mathbf{E}[(\xi_j, \widetilde\beta_j v)] \\
= \prod_{j=1}^\infty \widehat\mu_j(\widetilde\alpha_j u) \prod_{j=1}^\infty \widehat\mu_j(\widetilde\beta_j v), \quad u, v \in Y.
\end{multline*}
As a result we obtain that the characteristic functions $\widehat\mu_j(y)$ 
satisfy   equation~(\ref{e2}).
\end{proof}

\begin{lemma}\label{th1}
Let $X$ be a countable discrete Abelian group with character group $Y$,  
and   
let $\alpha_j, \beta_j$, $j=1, 2, \dots$,
be automorphisms  of   $X$.
Let $\xi_j$ be independent random variables taking 
values in $X$  with distributions $\mu_j$.   
Assume that the following conditions hold:    
    \renewcommand{\labelenumi}{\rm(\roman{enumi})}
\begin{enumerate}  
\item	

$L_1=\sum\limits_{j=1}^\infty\alpha_j\xi_j$ and 
$L_2=\sum\limits_{j=1}^\infty\beta_j\xi_j$;

\item

the linear forms $L_1$ and $L_2$ are independent;

\item	

the characteristic functions $\widehat\mu_{L_1}(y)$ and $\widehat\mu_{L_2}(y)$ do not vanish.

 \end{enumerate}
Then all $\mu_j$ are degenerate distributions.
\end{lemma}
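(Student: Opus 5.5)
The plan is to symmetrize, take logarithms (allowed by condition (iii)), and average the resulting functional equation over the compact group $Y$ with Haar measure. Averaging in one variable should kill the dependence on that variable and force the characteristic function of $L_1$ to be constant.

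\textbf{Step 1: symmetrize.} Since $X$ is discrete, $Y$ is compact. Put $\nu_j=\mu_j*\bar\mu_j$, so that $\widehat\nu_j(y)=|\widehat\mu_j(y)|^2\in[0,1]$. Taking $\xi_j'$ to be an independent copy of the sequence $\xi_j$, the forms built from $\xi_j-\xi_j'$ are again independent and converge in distribution. So by Lemma~\ref{l1} the $\widehat\nu_j$ satisfy equation (\ref{e2}):
$$
\prod_{j=1}^\infty\widehat\nu_j(\widetilde\alpha_j u+\widetilde\beta_j v)=F(u)G(v),\quad
F(u)=|\widehat\mu_{L_1}(u)|^2,\ G(v)=|\widehat\mu_{L_2}(v)|^2 .
$$
By (iii), $F$ and $G$ are continuous and nonvanishing on the compact group $Y$. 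Hence $F,G\ge c>0$ for some constant $c$.

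\textbf{Step 2: take logarithms.} Every factor of a product of numbers in $[0,1]$ is at least the product itself. So $\widehat\nu_j(\widetilde\alpha_j u+\widetilde\beta_j v)\ge c^2$ for all $j,u,v$, and $\psi_j=\log\widehat\nu_j$ is a well-defined continuous function on $Y$. The partial products converge uniformly on $Y^2$, and also on $Y$ with $v=0$, to functions bounded below by a positive constant. Therefore the tail products tend to $1$ uniformly, and the series
$$
\sum_{j=1}^\infty\psi_j(\widetilde\alpha_j u+\widetilde\beta_j v)=A(u)+B(v),\qquad A=\log F,\ B=\log G,
$$
converges uniformly on $Y^2$. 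The same holds for $\sum_j\psi_j(\widetilde\alpha_j u)=A(u)$.

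\textbf{Step 3: integrate over $v$.} Fix $u$ and integrate both sides with respect to $m_Y(dv)$. Uniform convergence lets us integrate term by term. Since $\widetilde\beta_j\in\operatorname{Aut}(Y)$ and $Y$ is compact, $\widetilde\beta_j$ preserves $m_Y$, and so do translations. Hence
$$
\int_Y\psi_j(\widetilde\alpha_j u+\widetilde\beta_j v)\,dm_Y(v)=\int_Y\psi_j\,dm_Y=:m_j,
$$
which does not depend on $u$. This gives $\sum_j m_j=A(u)+\int_Y B\,dm_Y$ for all $u$, so $A$ is constant. Since $A(0)=0$, we get $F\equiv1$.

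\textbf{Step 4: conclude degeneracy.} Now $\prod_j\widehat\nu_j(\widetilde\alpha_j u)=1$ with all factors in $[0,1]$, so every factor equals $1$. Because $\widetilde\alpha_j$ is surjective, $\widehat\nu_j\equiv1$ on $Y$, i.e.\ $|\widehat\mu_j|\equiv1$. Then $\nu_j=E_0$, which means $\xi_j-\xi_j'=0$ almost surely, so each $\mu_j$ is degenerate.

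The main obstacle I expect is the analytic justification in Step 2: showing that the series of logarithms converges uniformly (or at least in $L^1(m_Y)$ in $v$ for each $u$) so that term-by-term integration is legitimate. I would derive this from the uniform convergence in Lemma~\ref{l1} together with the lower bound $c^2$. Writing $P_n$ for the $n$th partial product and $P$ for its limit, the tail products are $P/P_n$, which converge uniformly to $1$ because $P_n\ge P\ge c^2$. The remaining steps are formal once this is in place. The example of Haar measures on $\mathbb{Z}(3)$ shows why condition (iii) is essential exactly here.
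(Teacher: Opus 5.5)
Your proposal is correct and follows essentially the same route as the paper: symmetrize to $\nu_j=\mu_j*\bar\mu_j$, use (iii) to take logarithms, and integrate the resulting additive equation over the compact group $Y$ against the automorphism-invariant Haar measure $m_Y$. The differences are cosmetic. The paper integrates in $u$ and obtains $\sum_j f_j(\widetilde\beta_j v)=0$, whereas you integrate in $v$ and obtain $|\widehat\mu_{L_1}|\equiv 1$; you also supply the justification the paper leaves implicit, namely the lower bound $c^2$ on the factors and the uniform convergence of the logarithmic series.
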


\begin{proof}  Since $X$ is a discrete Abelian group, $Y$ is a compact Abelian group. 
By Lemma~\ref{l1}, it follows from
conditions (i) and (ii) that the characteristic functions  $\widehat \mu_j(y)$ 
satisfy equation~(\ref{e2}) and 
the infinite products in the both sides of equation~(\ref{e2})
 converge uniformly on the group $Y^2$.
It follows from condition (iii) that 
all characteristic functions $\widehat \mu_j(y)$ do not vanish on 
 $Y$. Set $\nu_j = \mu_j * \bar \mu_j$. Then
 $\widehat \nu_j(y) = |\widehat \mu_j(y)|^2 > 0$   for all $y \in Y$.
Obviously, the characteristic functions  $\widehat \nu_j(y)$ also
satisfy equation~(\ref{e2}). 
Let $f_j(y) = - \ln{\widehat \nu_j(y)}$, $y\in Y$. 
By taking logarithms on both sides of equation~(\ref{e2}) for 
the characteristic functions $\widehat \nu_j(y)$, we obtain
 that the functions $f_j(y)$ satisfy the
equation
\begin{equation}
\label{e3} \sum_{j = 1}^{\infty}  f_j(\widetilde\alpha_j u
+\widetilde\beta_j v) = \sum_{j = 1}^{\infty}  f_j(\widetilde\alpha_j u
)+\sum_{j = 1}^{\infty}  f_j(\widetilde\beta_j v),\quad u, v \in Y,
\end{equation}
where the series in the  both sides of equation~(\ref{e3}) converge uniformly on 
 the group $Y^2$. Integrating  equation~(\ref{e3}) over the group $Y$ with
respect to the Haar measure $m_Y$ in the variable $u$  and 
using that  the Haar
 measure $m_Y$ is $Y$-invariant, we find
\begin{equation}
\label{e4} \sum_{j = 1}^{\infty} f_j(\widetilde \beta_j
v)= 0, \quad v\in Y.
\end{equation}
Taking into account that $f_j(y)\le 0$, it follows from~(\ref{e4})  that   
$f_j(\widetilde \beta_j y)=0$. 
Since $\widetilde \beta_j\in\operatorname{Aut}(Y)$, we have 
$f_j(y)=0$
for all $y\in Y$, $j=1, 2,\dots$. This implies that
$\widehat\nu_j(y)= 1$ for all $y\in Y$. Hence  $\nu_j=E_0$, $j=1, 2,\dots$.  
It follows from this that all $\mu_j$ 
are degenerate distributions. 
\end{proof}

Note that, unlike the case where independent random variables take values on the real
line or in the space $\mathbb{R}^n$, in Lemma~\ref{th1} we 
impose no restrictions on the
coefficients $\alpha_j$ and $\beta_j$. At the same time, we assume that the characteristic
functions $\widehat\mu_{L_1}(y)$ and $\widehat\mu_{L_2}(y)$ do not vanish.
 
\begin{remark}\label{r2}
Let $X$ be a countable discrete Abelian group, and let 
$\xi_j$, $j=1, 2,\dots$, be independent random variables taking values in $X$  
with degenerate distributions $E_{x_j}$, $x_j\in X$. 
If the series $\sum\limits_{j=1}^\infty\xi_j$ converges in distribution, 
then only a finite number of elements $x_j$ differ  from zero. 
Taking this into account, we see that if conditions (i)--(iii) of 
Lemma~\ref{th1} are satisfied, then in fact  the series 
  $$L_1=\sum\limits_{j=1}^\infty\alpha_j\xi_j\  \text{and} \  
  L_2=\sum\limits_{j=1}^\infty\beta_j\xi_j$$ in Lemma~\ref{th1} are finite sums.
\end{remark}

\begin{remark}\label{nnr2}
Generally speaking, Lemma~\ref{th1}  fails if condition (iii) is not satisfied. 
We construct an example for a finite Abelian group of odd order. 

Let $X$ 
be a compact Abelian group with character group $Y$. We recall that 
the characteristic function $\widehat m_X(y)$ is of the form
\begin{equation}
\label{fe22.1}\widehat m_X(y)=
\begin{cases}
1 & \text{\ if\ }\ \ y=0,
\\ 0 & \text{\ if\ }\ \ y\ne 0.
\end{cases}
\end{equation} 

Assume that $X$ is a finite Abelian group of odd order.
Let $\alpha_j, \beta_j$, $j = 1, 2, \dots$, be automorphisms of $X$ such that
$\alpha_1 = \alpha_2 = \beta_1 = I$, $\beta_2 = -I$, and all remaining
$\alpha_j$ and $\beta_j$ are arbitrary automorphisms of $X$.
Let $\xi_j$ be independent random variables taking values in $X$ with
distributions $\mu_j$ such that $\mu_1 = \mu_2 = m_X$.
Then $\mu_{L_1} = \mu_{L_2} = m_X$, and condition~(i) is satisfied.  

We now verify that condition~(ii) is also fulfilled. By Lemma~\ref{l1},
it suffices to check that the characteristic functions $\widehat{\mu}_j(y)$
satisfy equation~(\ref{e2}), which takes the form
\begin{multline}
\label{nne2}\widehat m_X(u+v)\widehat m_X(u-v)\prod_{j=3}^\infty\widehat\mu_j(\widetilde\alpha_j 
u+\widetilde\beta_j
v)\\=\widehat m_X^2(u)\widehat m_X^2(v)\prod_{j=3}^\infty\widehat\mu_j(\widetilde\alpha_j
u)\prod_{j=3}^\infty\widehat\mu_j(\widetilde\beta_j v), \quad u, v \in
Y. 
\end{multline}
In view of~(\ref{fe22.1}), the right-hand side of equation~(\ref{nne2}) 
is equal to zero if  $u\ne 0$. Suppose that $u\ne 0$
and  the left-hand side of equation~(\ref{nne2}) is not equal to zero. 
In view of~(\ref{fe22.1}), it follows from this that $u+v=0$ and $u-v=0$.
Hence $2u=0$. Since $X$ is a finite Abelian group of odd order and 
the group $Y$ is isomorphic to $X$, the group $Y$ contains
no elements of order 2. This implies that $u=0$. 
The obtained contradiction shows that the left-hand side of equation~(\ref{nne2}) 
is equal to zero. Thus, we see that equation~(\ref{nne2}) 
 becomes an equality if $u\ne 0$. It is obvious that equation~(\ref{nne2}) 
 becomes an equality if $u=0$. So, the characteristic functions 
 $\widehat\mu_j(y)$ satisfy equation~(\ref{nne2}) and hence condition 
 (ii) is satisfied.
\end{remark}

It is convenient to formulate the following well-known statement as a lemma 
(for the proof see, e.g., \cite[Proposition 2.10]{F}).

\begin{lemma}\label{l4}
Let  $X$ be a locally
	compact Abelian group with character group $Y$,  and let 
	$H$ be a closed
	subgroup of $Y$.
	Let $\mu$  be a distribution on $X$  such that
	$\widehat\mu(y)=1$  for all $y\in H$. Then $\mu$ is supported in 
	the annihilator $A(X,H)$.
	\end{lemma}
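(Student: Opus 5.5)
The plan is a direct argument from the definition of the Fourier transform, using only that characters have modulus one. Fix $y\in H$. The hypothesis $\widehat\mu(y)=1$ reads
$$
\int_X (x,y)\,d\mu(x)=1 .
$$
Taking real parts, this becomes
$$
\int_X \bigl(1-\operatorname{Re}(x,y)\bigr)\,d\mu(x)=0 .
$$
The integrand is continuous and nonnegative, because $|(x,y)|=1$. It therefore vanishes $\mu$-almost everywhere.

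The first step is to pass from ``almost everywhere'' to ``on the support''. Since the integrand is continuous, it vanishes on the whole support $\operatorname{supp}\mu$: if it were positive at some $x_0\in\operatorname{supp}\mu$, it would be positive on an open neighbourhood of $x_0$, and that neighbourhood has positive $\mu$-measure. Thus $\operatorname{Re}(x,y)=1$ for all $x\in\operatorname{supp}\mu$. Because $|(x,y)|=1$, this forces $(x,y)=1$.

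The second step is to let $y$ range over $H$. Since $y\in H$ was arbitrary, $\operatorname{supp}\mu$ is contained in
$$
\bigcap_{y\in H}\{x\in X:(x,y)=1\}=A(X,H).
$$
This set is closed, being an intersection of preimages of $\{1\}$ under continuous maps. Hence $\mu(X\setminus A(X,H))=0$, which is the claim.

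The only point needing care is the measure-theoretic step from ``zero almost everywhere'' to ``zero on the support'' for a continuous nonnegative function. This step is standard for Radon measures on a locally compact group, where the complement of the support is $\mu$-null. Alternatively, one can avoid supports entirely. For each $y\in H$ the set $\{x:(x,y)\ne1\}$ is open and $\mu$-null, and the complement of $A(X,H)$ is the union of these sets over $y\in H$. Being a union of open $\mu$-null sets, it is $\mu$-null by inner regularity, i.e. $\tau$-additivity of Radon measures. In the second-countable setting used in the paper, one can also simply take a countable subcover of this union.
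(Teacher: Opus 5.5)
Your proof is correct. It is the standard argument: for $y\in H$, integrating the nonnegative continuous function $1-\operatorname{Re}(x,y)$ against $\mu$ shows that $(x,y)=1$ on the support of $\mu$. The paper gives no proof of its own and quotes the lemma from Proposition~2.10 of the reference it cites; that proposition is, as far as I recall, proved by this same computation. You also correctly handle the one measure-theoretic point, that the complement of the support is $\mu$-null, either by $\tau$-additivity of Radon measures or by Lindel\"{o}f in the second-countable case. As you implicitly note, closedness of $H$ is never needed.
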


Let us formulate now Ibragimov's theorem. Denote by $\|\cdot\|$ 
a norm on the 
space of  real $n\times n$~matrices.

\begin{I}{\rm(\!\!\cite{I2})}
Let $A_j$ and  $B_j$, $j=1, 2, \dots$, be real invertible $n\times n$~matrices, 
and let $\xi_j$ be 
independent random vectors  taking values in the space $\mathbb{R}^n$ 
with distributions 
$\mu_j$.  
Assume that the following conditions hold:
\renewcommand{\labelenumi}{\rm(\roman{enumi})}
\begin{enumerate}
\item	
$L_1=\sum\limits_{j=1}^\infty A_j\xi_j$ and 
$L_2=\sum\limits_{j=1}^\infty B_j\xi_j$;
\item
the linear forms $L_1$ and $L_2$ are independent;

\item

at least one of the sequences $\{A_jB_j^{-1}\}$ or 
$\{B_jA_j^{-1}\}$ is bounded;
\item

 the sequence
$\{\|A_jB_j^{-1}\|\cdot\|B_jA_j^{-1}\|\}$ is bounded.
\end{enumerate}
Then all $\mu_j$ are Gaussian distributions.
\end{I}

\section{Main theorem}

Before proceeding to the formulation and proof of the main theorem, 
we note the following. 
Let $X$ be a locally 
compact Abelian group with character group $Y$, and let $K$ be a 
closed subgroup of $X$.  
The character group of the factor group $X/K$ is topologically isomorphic to the annihilator $A(Y,K)$.  
Let $\alpha \in \operatorname{Aut}(X)$ and assume that $\alpha(K) = K$.
Then the restriction of $\alpha$ to $K$ is a topological automorphism of $K$,
which we denote by $\alpha|_K$.
The automorphism $\alpha$ induces a topological automorphism
$\widehat{\alpha}$ of the factor group $X/K$, defined by
$\widehat{\alpha}[x] = [\alpha x]$, $[x] \in X/K$.
Doing so, the restriction of $\widetilde{\alpha}$ to 
the annihilator $A(Y,K)$ 
coincides with the adjoint automorphism of $\widehat{\alpha}$.

Assume  $X=\mathbb{R}^n\times G$,
where $G$ is a discrete  Abelian group. Then $\mathbb{R}^n$ is 
the connected component of zero of $X$ and hence 
 a characteristic subgroup. The factor group
$X/\mathbb{R}^n$ is topologically isomorphic to $G$ and we can consider 
the induced automorphism $\widehat\alpha$ of $X/\mathbb{R}^n$
as an automorphism of the group $G$.
For  $\alpha\in\textrm{Aut}(X)$ we identify the restriction 
$A=\alpha|_{\mathbb{R}^n}$
with the corresponding invertible  $n\times n$ matrix. 

Now we can formulate the main theorem. It follows easily from the structure 
theorem for locally compact Abelian groups (\!\!\cite[(24.30)]{HeRo1}) 
that a second-countable 
locally compact Abelian group $X$ contains no 
 nontrivial compact subgroups
 if and only if  $X$ is topologically isomorphic 
to a group of the form $\mathbb{R}^n\times G$,
where $G$ is a countable discrete  torsion free Abelian group.
	
\begin{theorem}\label{th4}
Let  $X=\mathbb{R}^n\times G$,
where $G$ is a countable discrete  torsion free Abelian group. Let  
$\alpha_j$, $\beta_j$, $j=1,2, \dots$,
be topological automorphisms  of  $X$ such that the matrices 
$A_j= \alpha_j|_{\mathbb{R}^n}$ and  $B_j=\beta_j|_{\mathbb{R}^n}$ satisfy
conditions $\rm(iii)$ and $\rm(iv)$ of Ibragimov's theorem.
Let $\xi_j$ be independent random variables taking
values in the group $X$  with distributions $\mu_j$.   Assume that the following 
conditions hold:
\renewcommand{\labelenumi}{\rm(\roman{enumi})}
\begin{enumerate}

\item

$L_1=\sum\limits_{j=1}^\infty\alpha_j\xi_j$ and $L_2=
\sum\limits_{j=1}^\infty\beta_j\xi_j$;

\item

the linear forms $L_1$ and $L_2$ are independent.

\end{enumerate}
Then all $\mu_j$ are shifts of Gaussian distributions in $\mathbb{R}^n$.
\end{theorem}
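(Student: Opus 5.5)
Since $G$ is discrete and torsion free, the character group is $Y=\mathbb{R}^n\times H$, where $H$ is the compact connected character group of $G$. The subgroup $\mathbb{R}^n$ of $X$ is characteristic, so every $\alpha_j,\beta_j$ induces automorphisms $\widehat\alpha_j,\widehat\beta_j$ of $X/\mathbb{R}^n\cong G$. Consider the natural homomorphism $\pi\colon X\to G$. The random variables $\pi\xi_j$ are independent in $G$, and since $\pi$ is continuous, the series $\sum\widehat\alpha_j\pi\xi_j=\pi L_1$ and $\sum\widehat\beta_j\pi\xi_j=\pi L_2$ converge in distribution and are independent. The goal is to apply Lemma~\ref{th1} to them. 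This gives that each $\pi(\mu_j)$ is degenerate, say $E_{g_j}$, so that $\mu_j$ is supported in the coset $\mathbb{R}^n+g_j$. By Remark~\ref{r2}, only finitely many $g_j$ are nonzero, and we may then shift: set $\xi_j'=\xi_j-g_j$ (viewing $g_j=(0,g_j)$). Then $L_1-\sum\alpha_j g_j$ and $L_2-\sum\beta_j g_j$ are still independent forms, now of random vectors in $\mathbb{R}^n$ with coefficients $A_j,B_j$. Here we use that $\alpha_j$ maps $\mathbb{R}^n$ onto itself by the matrix $A_j$. Ibragimov's theorem then shows that every $\mu_j$ is a shift of a Gaussian distribution in $\mathbb{R}^n$.

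The crux is verifying condition (iii) of Lemma~\ref{th1}, namely that $\widehat\mu_{\pi L_1}$ and $\widehat\mu_{\pi L_2}$ do not vanish on $H$. The natural route is to show that $\widehat\mu_{L_1}$ and $\widehat\mu_{L_2}$ do not vanish on $A(Y,\mathbb{R}^n)=H$. First I would symmetrize, passing to $\nu_j=\mu_j*\bar\mu_j$, which preserves equation~(\ref{e2}) by Lemma~\ref{l1}; this makes all characteristic functions nonnegative. Then I would exploit that $H$ is connected. Suppose $\widehat\mu_{L_1}(h)=0$ for some $h\in H$. Since $\widehat\mu_{L_1}(0)=1$ and $H$ is connected, there is a boundary point of the set $\{y: \widehat\mu_{L_1}(y)>0\}$. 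Near such a point I would use equation~(\ref{e2}) in the form $\widehat\mu_{(L_1,L_2)}(u,v)=\widehat\mu_{L_1}(u)\widehat\mu_{L_2}(v)$. On the left it is a single infinite product, so the zeros of $\prod_j\widehat\nu_j(\widetilde\alpha_ju+\widetilde\beta_jv)$ must coincide with the union of the zeros of the two factors.

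A cleaner alternative, which I would try first, avoids zeros entirely. Apply Lemma~\ref{l1} on the group $G$ with the symmetrized distributions and $f=-\ln$ only on the open neighbourhood of zero in $H$ where the products are positive. Then obtain, as in (\ref{e3}), an additive equation for $\Phi_1(u)=-\ln\widehat\mu_{\pi L_1}(u)$ and $\Phi_2(v)=-\ln\widehat\mu_{\pi L_2}(v)$ near zero. From this, show that $\Phi_1$ and $\Phi_2$ vanish there, by an averaging or maximality argument: each term $f_j\ge0$, and the left side is controlled by the right. This makes the characteristic functions equal to $1$ near zero. Since $H$ is connected, a neighbourhood of zero generates $H$, and a characteristic function equal to $1$ on a generating set is $1$ on the generated subgroup (Lemma~\ref{l4}). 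So they equal $1$ on all of $H$, which proves the nonvanishing.

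I expect this nonvanishing step to be the main obstacle. The integration trick in Lemma~\ref{th1} needs global positivity, so a local version on a neighbourhood, combined with connectedness of $H$, is where the real work lies.
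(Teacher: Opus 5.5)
Your reduction has the same architecture as the paper's and would work once the crux is settled. Projecting to $G$, applying Lemma~\ref{th1} to $\pi\xi_j$, using Remark~\ref{r2}, shifting, and then invoking Ibragimov's theorem is sound. Also, nonvanishing of $|\widehat\mu_{L_i}|^2$ on $H$ does give nonvanishing of $\widehat\mu_{L_i}$ there.

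The gap is exactly the step you flag: you never prove that $\widehat\mu_{L_1}$ and $\widehat\mu_{L_2}$ do not vanish on $H$, and neither of your routes can do it as described.

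Your preferred route fails for two reasons. First, the Haar-averaging of Lemma~\ref{th1} needs invariance under translations by the whole compact group, which a neighbourhood of zero does not have. Second, near zero the additive equation for $f_j=-\ln\widehat\nu_j$ has nonzero solutions. Take $G=\mathbb{Z}$, $H=\mathbb{T}$, $\alpha_j=I$, $\widetilde\beta_1|_H=I$, $\widetilde\beta_2|_H=-I$, and $f_j=0$ for $j\ge3$. Then $f_1(t)=f_2(t)=\sigma t^2$ satisfy $f_1(u+v)+f_2(u-v)=f_1(u)+f_2(u)+f_1(v)+f_2(-v)$ for all small $u,v$. Locally $\mathbb{T}$ is $\mathbb{R}$, where Gaussian solutions exist. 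So no averaging or maximality argument that uses only the local equation and $f_j\ge0$ can force $\Phi_1=\Phi_2=0$ near zero. Killing the quadratic part is a global phenomenon on the compact group, and it presupposes the global positivity you are trying to prove.

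Your first route runs into a different obstruction. An infinite product of strictly positive factors can converge to $0$. Comparing the zero sets of the two sides of (\ref{e2}) therefore says nothing about the individual factors $\widehat\nu_j$.

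The missing idea is a two-stage argument for $\widehat\mu_j\ge0$, after normalizing $\alpha_j=I$.

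First, consider the set of $h\in H$ at which the tail products $\prod_{j\ge n}\widehat\mu_j(h)$ tend to $1$. This set is open, and it is a subgroup by the inequality $1-\operatorname{Re}\widehat\mu(u+v)\le2[(1-\operatorname{Re}\widehat\mu(u))+(1-\operatorname{Re}\widehat\mu(v))]$. By connectedness it equals $H$. Compactness then yields a single index $p$ with $\prod_{j>p}\widehat\mu_j(h)>0$ and $\prod_{j>p}\widehat\mu_j(\widetilde\beta_jh)>0$ for every $h\in H$. Hence only the finitely many factors with $j\le p$ can vanish.

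Second, positivity of those finitely many factors propagates. Let $\mathcal A=\{h\in H:\widehat\mu_j(h)>0\ \text{for all}\ j\}$ and $\mathcal B=\{h\in H:\widehat\mu_j(\widetilde\beta_jh)>0\ \text{for all}\ j\}$. Set $\mathcal C=\bigcap_{j\le p}\widetilde\beta_j(\mathcal B)$, an open symmetric neighbourhood of zero. Substituting $u\in\mathcal A$ and $v\in\mathcal B$ into (\ref{e5}) gives $\mathcal A+\mathcal C\subset\mathcal A$. Since $\mathcal C$ generates the connected group $H$, we get $\mathcal A=H$. This global positivity on $H$ is what makes Lemma~\ref{th1} applicable.
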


\begin{proof}
Considering the new independent random variables $\kappa_j = \alpha_j \xi_j$, 
we may assume, without loss of generality, that $\alpha_j = I$, and hence
$A_j = I$,
$j = 1, 2, \dots$.
Denote by $Y$ and $H$ the character groups of the groups $X$ and $G$, 
respectively. Since $G$ is a discrete torsion-free Abelian group, 
$H$ is a compact connected Abelian group  (\!\!\cite[(23.17), (24.19)]{HeRo1}). 
The group $Y$ is 
topologically isomorphic to the group $\mathbb{R}^n \times H$. 
To avoid introducing new notation, we assume that 
$Y = \mathbb{R}^n \times H$.
By Lemma~\ref{l1}, the characteristic functions  $\widehat \mu_j(y)$ 
satisfy equation~(\ref{e2}) which takes the form
\begin{equation}
\label{e5}\prod_{j=1}^\infty\widehat\mu_j(u+\widetilde\beta_j
v)=\prod_{j=1}^\infty\widehat\mu_j(u)\prod_{j=1}^\infty\widehat\mu_j(\widetilde\beta_j v), 
\quad u, v \in
Y,
\end{equation}
where the infinite products in both sides of equation~(\ref{e5})
converge uniformly on the group $Y^2$.

1. First, we prove the theorem under the assumption that
 $\widehat\mu_j(y)\ge 0$ for all $y\in Y$, $j=1, 2, \dots$. 

We recall that an element of a locally compact Abelian group is 
called a compact element if the cyclic subgroup generated by it has compact closure.
Since $H$ consists of all compact elements of the group $Y$, 
the subgroup $H$ is a characteristic subgroup of $Y$, and we can 
consider the restriction of equation~(\ref{e5}) to $H$.  Let
\begin{equation*}
A=\Biggl\{ h\in H:\ \text{for every }\varepsilon>0 \text{ there exists } 
n_\varepsilon(h) \text{ such that }
1-\prod_{j=n_\varepsilon(h)}^\infty \widehat{\mu}_j(h) < \varepsilon \Biggr\}.
\end{equation*}
The set $A$ is non-empty since $0 \in A$. Furthermore, the continuity of the 
functions $\prod\limits_{j=n}^\infty \widehat{\mu}_j(y)$ for each positive integer $n$ 
implies that $A$ is open.
We note that any characteristic function $\widehat{\mu}(h)$ on a locally compact 
Abelian group satisfies the inequality  
\begin{equation}\label{09.02.1}
1-\operatorname{Re}\ \widehat\mu (u+v)\le 2[(1-\operatorname{Re}\ \widehat\mu
(u))+(1-\operatorname{Re}\ \widehat\mu( v))],
\quad u, v \in H
\end{equation}
(see, e.g., \cite[Chapter~IV, pp.~82, 88]{Parthasarathy}).
It follows from~(\ref{09.02.1}) that $A$ is a subgroup of $H$, and hence $A$ 
 is an open subgroup of $H$.
As $H$ is a connected group, we conclude that $A = H$.

Take an element
 $h\in H$. Then $h\in A$ and hence there exists a natural number 
 $n_h$ such that
 the infinite product 
 $\prod\limits_{j=n_h}^\infty\widehat\mu_j(h)$ converges to a positive number. This implies
 that there exists a neighbourhood $U_h$ of $h$ such that for any $u\in U_h$
 the infinite product  
 $\prod\limits_{j=n_h}^\infty\widehat\mu_j(u)$ also 
 converges to a positive number, 
 depending on $u$.   The family of neighbourhoods $U_h$, where $h$ runs 
 through $H$, 
  forms an open covering of the group $H$. Since $H$ is a compact, let 
  $\left\{U_{h_i}\right\}$ be a finite subcovering. 
Set $l=\max\limits_{i}\{n_{h_i}\}$. Then
\begin{equation}\label{26.12.1}
\prod\limits_{j=l+1}^\infty\widehat\mu_j(h)>0 \quad \mbox{for all} \ \ h\in H.
\end{equation}
Arguing similarly we find $m$ such that
\begin{equation}\label{26.12.3}
\prod\limits_{j=m+1}^\infty\widehat\mu_j(\widetilde\beta_{j}h)>0 
\quad \mbox{for all} \ \ h\in H.
\end{equation}
Let  
$$p=\max\{l, m\},\quad {\cal A}_j=\{h\in H:\widehat\mu_{j}(h)> 0\},\quad
{\cal A}=\bigcap\limits_{j=1}^\infty {\cal A}_j, $$$$ 
{\cal B}_j=\{h\in H:\widehat\mu_{j}(\widetilde\beta_{j} h)> 0\},\quad
{\cal B}=\bigcap\limits_{j=1}^\infty {\cal B}_j, \quad 
{\cal C}=\bigcap\limits_{j=1}^{p} \widetilde\beta_{j}({\cal B}).$$ 
It follows from~(\ref{26.12.1}) that  
\begin{equation}
\label{26.12.2}{\cal A}_j=H \quad \mbox{for all} \ \ j\ge p+1.
\end{equation}

Let us prove the inclusion 
\begin{equation}
\label{e6}{\cal A}+{\cal C}\subset {\cal A}.
\end{equation}
Take $u\in {\cal A}$, $w\in
{\cal C}$. Then $w=\widetilde\beta_{j} w_j$, where $w_j\in {\cal B}$, $j=1, 2,
\dots, p$. Substitute $u\in {\cal A}$, $v=w_1$ in equation~(\ref{e5}). 
We obtain
\begin{equation}
\label{ne7}\prod_{j=1}^\infty\widehat\mu_j(u+\widetilde\beta_j
w_1)=\prod_{j=1}^\infty\widehat\mu_j(u)\prod_{j=1}^\infty\widehat\mu_j(\widetilde\beta_j w_1), 
\quad u  \in
{\cal A}.\end{equation}
Since $u\in {\cal A}$ and $w_1\in {\cal B}$, we have $\widehat\mu_j(u)>0$ and
 $\widehat\mu_j(\widetilde\beta_j w_1)>0$, $j=1, 2, \dots$. In view 
 of~(\ref{26.12.1}) and~(\ref{26.12.3}), the right-hand side of   
equality~(\ref{ne7}) is different from zero.
Hence the left-hand side  of   
equality~(\ref{ne7}) is also different from zero. In particular,
$\widehat\mu_{1}(u+\widetilde\beta_{1}
w_1)=\widehat\mu_{1}(u+w)>0$,
 i.e.,
$u+w\in {\cal A}_1$. Arguing similarly we get that $u+w\in {\cal A}_k$ for $k=2, 3,
\dots, p$. Taking into account~(\ref{26.12.2}), this implies 
that $u+w\in {\cal A}$ and~(\ref{e6}) is proved.

For a natural number $k$, let 
$$(k){\cal C}=\{h\in H: h=h_1+\dots+h_k, \ h_j\in {\cal C}\}.$$ 
We obtain from~(\ref{e6}) that 
 \begin{equation}
\label{e9}{\cal A}+\bigcup\limits_{k=1}^\infty (k){\cal C}\subset {\cal A}.
\end{equation}

 It follows from~(\ref{26.12.2}) that $B_j = H$ for all $j \ge p+1$, 
so that the set $\mathcal{B} = \prod\limits_{j=1}^\infty B_j$ and, hence, 
$\mathcal{C}$ are open sets.
  This implies that the set
 $\bigcup\limits_{k=1}^\infty (k){\cal C}$ is an open subgroup of $H$.
As the group $H$ is connected, this yields
 \begin{equation}
\label{e10}H=\bigcup\limits_{k=1}^\infty (k){\cal C}.
\end{equation}
It follows from~(\ref{e9}) and~(\ref{e10}) that ${\cal A}=H$,
i.e.,   $\widehat\mu_{j}(h)> 0$ for all
$h\in H$, $j=1, 2, \dots$. Since
$$
\widehat\mu_{L_1}(y)=\prod_{j=1}^\infty\widehat\mu_{j}(y), 
\quad \widehat\mu_{L_2}(y)=\prod_{j=1}^\infty\widehat\mu_{j}(\widetilde\beta_{j} y), 
\quad y\in
Y,
$$
taking into account~(\ref{26.12.1}) and~(\ref{26.12.3}), we get that the restrictions of
 the characteristic functions 
$\widehat\mu_{L_1}(y)$ and $\widehat\mu_{L_2}(y)$ to the subgroup $H$  do not
vanish. 

We note that the restriction of the characteristic function 
$\widehat{\mu}_j(y)$ to $H$ coincides with the characteristic function of a 
distribution $\lambda_j$ on the group $G$, which is determined by the formula
$\lambda_j(B)=\mu_j(\mathbb{R}^n\times B)$ for any subset $B$ of $G$. 
Since $H = A(Y, \mathbb{R}^n)$, the restrictions of 
$\widetilde{\beta}_j$ to $H$ coincide with the adjoint automorphisms of 
$\widehat{\beta}_j$, where $\widehat{\beta}_j$ are the automorphisms induced on 
the factor group $X/\mathbb{R}^n$, which is topologically isomorphic to $G$.
 Let $\eta_j$ be the independent random variables
taking values in $G$, with distributions $\lambda_j$. 
Taking into account equation~(\ref{e5}), it follows from Lemma~\ref{l1} that
 the linear forms
\[
M_1 = \sum_{j=1}^n \eta_j \ \text{and} \  M_2 = \sum_{j=1}^n \widehat\beta_j \eta_j
\]
are independent. 
The characteristic functions 
$\widehat{\mu}_{M_1}(h)$ and $\widehat{\mu}_{M_2}(h)$ coincide with the restriction of
the characteristic functions 
$\widehat{\mu}_{L_1}(y)$ and $\widehat{\mu}_{L_2}(y)$ to the subgroup $H$.
Hence, they do not vanish. 
By Lemma~\ref{th1}, applied to the group $G$, we obtain that all $\lambda_j$ are
degenerate distributions. This implies that there exist
elements $g_j\in G$ such that $\widehat\lambda_j(h)=(g_j, h)$, $h\in H$, $j=1, 2, \dots$.
Taking into account that 
$\widehat\lambda_j(h)\ge 0$ for all $h \in H$, $j = 1, 2, \dots$, we conclude that
$g_1=g_2=\cdots=0$. Hence $\widehat\mu_j(h)=\widehat\lambda_j(h)=1$, $h \in H$, $j = 1, 2, \dots$,
and by Lemma~\ref{l4},
each of the distributions $\mu_j$ is supported in the annihilator 
$A(X, H)=\mathbb{R}^n$.
It means that we can consider $\xi_j$ as independent random vectors taking values 
in the space   $\mathbb{R}^n$, with distributions $\mu_j$. 
The linear forms $L_1$ and $L_2$ can be written as	
 $$L_1=\sum\limits_{j=1}^\infty \xi_j\ \text{and}\ 
 L_2=\sum\limits_{j=1}^\infty B_j\xi_j,
 $$
 and all
conditions  of Ibragimov's theorem for these linear forms hold. 
Applying Ibragimov's theorem to the independent 
random 
vectors $\xi_j$ and the linear forms $L_1$ and $L_2$, we obtain
 that all $\mu_j$ are Gaussian 
distributions in $\mathbb{R}^n$. Thus, we proved the theorem assuming that
all $\widehat\mu_j(y)\ge 0$, $y\in Y$.

2. We now prove the theorem in the general case. Set $\nu_j = \mu_j * \bar \mu_j$. Then
$\widehat \nu_j(y) = |\widehat \mu_j(y)|^2 \ge 0$ for all $y \in Y$.
The characteristic functions $\widehat \nu_j(y)$ also satisfy equation~(\ref{e5}). 
Denote by $\zeta_j$ independent random variables taking values in the group $X$ 
with distributions $\nu_j$. The series $\sum\limits_{j=1}^\infty\zeta_j$ and 
$\sum\limits_{j=1}^\infty\beta_j\zeta_j$ converge in distribution.
Let 
$$N_1=\sum\limits_{j=1}^\infty\zeta_j\  \text{and} \  
N_2=\sum\limits_{j=1}^\infty\beta_j\zeta_j.$$ By Lemma~\ref{l1}, the linear forms $N_1$ and $N_2$ are independent. 
Since $\widehat{\nu}_j(y) \ge 0$ for all $y \in Y$, as shown in Step~1,
all $\nu_j$ are Gaussian distributions in $\mathbb{R}^n$.
In view of $\nu_j = \mu_j * \bar \mu_j$, it is easy to verify that the distributions 
$\mu_j$  can be replaced by their shifts $\mu_j^\prime = \mu_j * E_{x_j}$, 
where $x_j \in X$, 
in such a way that the distributions $\mu_j^\prime$ are supported in $\mathbb{R}^n$.
As $\nu_j =\mu_j^\prime* \bar \mu_j^\prime$,
applying Cram\'er's theorem on the decomposition of the Gaussian distribution in 
$\mathbb{R}^n$, we conclude that all $\mu_j^\prime$ are Gaussian distributions 
in $\mathbb{R}^n$. Hence 
all $\mu_j$ are 
shifts of Gaussian distributions in $\mathbb{R}^n$.
\end{proof}

\section{Characterization of probability distributions on some other 
 groups by the independence of linear forms of an 
infinite sequence of\\ independent random variables}

Let ${\bm a} = (a_0, a_1, a_2, \dots)$, where $a_j$ are natural numbers 
with $a_j > 1$ for all $j$, 
and let $\Delta_{\bm a}$ denote the group of ${\bm a}$-adic integers 
(\!\!\cite[(10.2)]{HeRo1}). 
If $p$ is a prime number and ${\bm a} = (p, p, p, \dots)$, we write $\Delta_p$ 
for the corresponding group of ${\bm a}$-adic integers.
The group $\Delta_p$ is a ring (\!\!\cite[(10.10)]{HeRo1}).
As a
set $\Delta_{\bm a}$ coincides
with the Cartesian product $\mathop{\mbox{\rm\bf
P}}\limits_{n=0}^\infty\{0,1,\dots ,a_n-1\}$.

Consider the group
$\mathbb{R}\times\Delta_{\bm a}$. Let $u=(1, 0,\dots,0,\dots)\in \Delta_{\bm a}$, 
and let
 ${\cal B}$ be a subgroup of $\mathbb{R}\times\Delta_{\bm a}$ of the form
${\cal B}=\{(n,n{u})\}_{n=-\infty}^{\infty}$. The factor group 
$(\mathbb{R} \times \Delta_{\bm a})/{\cal B}$ is called an
${\bm a}$-adic solenoid and is denoted by $\Sigma_{\bm a}$. The group
$\Sigma_{\bm a}$ is compact, connected, and has dimension 1 
(\!\!\cite[(10.12), (10.13),
(24.28)]{HeRo1}). The character group of the group
$\Sigma_{\bm a}$ is topologically isomorphic to a discrete additive
group $
 H_{\bm a}$ of the rational numbers of the form
 \begin{equation*} 
H_{\bm a}=
\left\{\frac{m}{a_0a_1 \cdots a_n} : \ n = 0, 1,\dots; \ m
\in {\mathbb{Z}} \right\}
\end{equation*}
(\!\!\cite[(25.3)]{HeRo1}). It is convenient to consider
$H_{\bm a}$ as the character group of the group $\Sigma_{\bm a}$. 
We note that if $X$ is a locally compact connected Abelian group 
of dimension~1, then $X$ is topologically isomorphic to either
the real line, the circle group $\mathbb{T}$, or an
${\bm a}$-adic solenoid $\Sigma_{\bm a}$.

Denote by ${\mathbb Z}(m)$  the multiplicative group
of  $m$th roots of unity. 
Let $p$ be a prime number. Denote by ${\mathbb Z}(p^\infty)$ the 
multiplicative group
of  $p^n$th roots of unity, where $n$ goes through the nonnegative
integers.
We consider the group ${\mathbb Z}(p^\infty)$ equipped with the discrete 
topology. 
The groups ${\mathbb Z}(p^k)$, 
$k=0, 1, 2, \dots$,
are subgroups of ${\mathbb Z}(p^\infty)$. 
The character groups of ${\mathbb Z}(p^\infty)$ and $\Delta_p$ are topologically
isomorphic to $\Delta_p$ and ${\mathbb Z}(p^\infty)$, respectively.

Let us prove the following characterisation theorem
 for independent linear forms of an infinite number of independent 
 random variables.

\begin{theorem}\label{th5}
Let  
\begin{equation}
\label{ne10}
X = \mathbb{Z}(2^{m_1}) \times \mathbb{Z}(2^{m_2}) \times \cdots \times 
\mathbb{Z}(2^{m_k}), \quad m_1 < m_2 < \cdots < m_k,
\end{equation}
or $X = \mathbb{Z}(2^\infty)$, or $X = \Delta_2$.  
Let $\alpha_j, \beta_j$, $j=1,2,\dots$, be topological automorphisms 
of $X$, and let $\xi_j$ be independent random variables 
taking values in $X$  with distributions $\mu_j$.  
Assume that the following conditions hold:
\renewcommand{\labelenumi}{\rm(\roman{enumi})}
\begin{enumerate}

\item

$L_1=\sum\limits_{j=1}^\infty\alpha_j\xi_j$ and $L_2=\sum\limits_{j=1}^\infty\beta_j\xi_j$;

\item

the linear forms $L_1$ and $L_2$ are independent.
  
\end{enumerate}
Then all $\mu_j$ are degenerate distributions.
\end{theorem}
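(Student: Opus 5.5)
Throughout, replace $\xi_j$ by $\alpha_j\xi_j$, so that $\alpha_j=I$ for all $j$. Then replace $\mu_j$ by $\nu_j=\mu_j*\bar\mu_j$ and argue exactly as in Step~2 of the proof of Theorem~\ref{th4}. The $\widehat\nu_j(y)=|\widehat\mu_j(y)|^2\ge0$ again satisfy equation~(\ref{e5}), the corresponding series converge, and by Lemma~\ref{l1} the new forms are independent. If every $\nu_j$ is degenerate, then $\nu_j=E_0$ and every $\mu_j$ is degenerate. So it suffices to prove that $\widehat\nu_j\equiv1$ for all $j$, given $\widehat\nu_j\ge0$ and
\begin{equation*}
\prod_{j=1}^\infty\widehat\nu_j(u+\widetilde\beta_j v)=\prod_{j=1}^\infty\widehat\nu_j(u)\prod_{j=1}^\infty\widehat\nu_j(\widetilde\beta_j v),\quad u,v\in Y.
\end{equation*}

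The key structural fact I would use is that each of these groups has a nonzero element $y^*\in Y$ with $2y^*=0$ and $\widetilde\beta y^*=y^*$ for every $\beta\in\operatorname{Aut}(X)$.
\begin{itemize}
\item For $X=\Delta_2$ we have $Y=\mathbb{Z}(2^\infty)$, and $y^*$ generates its unique subgroup $\mathbb{Z}(2)$, which every automorphism fixes pointwise.
\item For $X$ as in (\ref{ne10}) we have $Y\cong X$. The subgroup $2^{m_k-1}Y\cong\mathbb{Z}(2)$ is characteristic, and this is where $m_1<\dots<m_k$ is used: the top cyclic factor is the only one of order $2^{m_k}$. Take $y^*$ to be its generator.
\end{itemize}
Substituting $u=v=y^*$ gives
\begin{equation*}
\prod_{j}\widehat\nu_j(2y^*)=\prod_j\widehat\nu_j(0)=1=\Bigl(\prod_j\widehat\nu_j(y^*)\Bigr)^2.
\end{equation*}
Since $0\le\widehat\nu_j\le1$, this forces $\widehat\nu_j(y^*)=1$ for all $j$. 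By Lemma~\ref{l4}, each $\nu_j$ is then supported in the characteristic subgroup $X_1=A(X,\langle y^*\rangle)$. For $\Delta_2$ this is $2\Delta_2\cong\Delta_2$. For the finite group it is $\mathbb{Z}(2^{m_1})\times\dots\times\mathbb{Z}(2^{m_k-1})$, which is again of the form (\ref{ne10}) with distinct exponents, or of smaller size if $m_{k-1}=m_k-1$, in which case the top cyclic factor has order $2^{m_{k-1}}$ and $X_1$ must be re-indexed.

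The automorphisms restrict to $X_1$, and the equation restricts to the character group of $X_1$, so I would iterate. For $\Delta_2$ the iteration gives support in $\bigcap_n2^n\Delta_2=\{0\}$. For finite $X$, induction on $|X|$ finishes the argument. The induction has to cover possible repeated exponents after the reduction, so I would formulate it for any finite $2$-group that admits a nonzero automorphism-fixed character of order $2$.

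I expect the main obstacle to be the case $X=\mathbb{Z}(2^\infty)$. There $Y=\Delta_2$ has no elements of order $2$, and every $\widetilde\beta_j$ is multiplication by a $2$-adic unit $c_j$, so $c_j\equiv1\pmod 2$. My first attempt would be to substitute $v=-u$, giving arguments $(1-c_j)u\in2\Delta_2$. I would combine this with the open-subgroup argument from Step~1 of the proof of Theorem~\ref{th4}, using inequality (\ref{09.02.1}). That argument shows the set where the tails $\prod_{j\ge n}\widehat\nu_j$ tend to $1$ is an open subgroup, hence contains some $2^N\Delta_2$. I would then try to propagate positivity down from $2^N\Delta_2$ to all of $\Delta_2$ using $c_j\equiv1$ modulo every $2^r$ where possible. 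The goal is to show that $\widehat\nu_{L_1}$ and $\widehat\nu_{L_2}$ do not vanish and then invoke Lemma~\ref{th1}. If that fails, a fallback is to pass to the finite quotients $\mathbb{Z}(2^n)$ of the torsion character structure and use the finite case.
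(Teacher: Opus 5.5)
Your proposal does not prove the case $X=\mathbb{Z}(2^\infty)$, and the plan you sketch aims at the wrong target. You want positivity of the products on all of $\Delta_2$ so that you can apply Lemma~\ref{th1} on the whole group. But $\Delta_2$ is totally disconnected, so the step in Theorem~\ref{th4} that upgrades an open subgroup to the whole group is unavailable, and you offer no replacement.

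The missing observation is that the logarithm-and-Haar-integration argument of Lemma~\ref{th1} only needs a compact subgroup $K\subset Y$ with two properties: $\widetilde\beta_j(K)=K$ for all $j$, and $\prod_j\widehat\nu_j(y)$, $\prod_j\widehat\nu_j(\widetilde\beta_j y)$ are positive on $K$. The group $K=2^m\Delta_2$ works for large $m$, by continuity at $0$, since $\{2^k\Delta_2\}$ is a base at zero. It is characteristic because every automorphism of $\Delta_2$ is multiplication by a $2$-adic unit. Integrating over $K$ gives $\widehat\nu_j\equiv1$ on $K$. By Lemma~\ref{l4}, every $\nu_j$ is then supported in $A(\mathbb{Z}(2^\infty),2^m\Delta_2)=\mathbb{Z}(2^m)$, a finite characteristic subgroup, and you are in the finite case. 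This is the paper's Step~2. Your fallback of working on $\mathbb{Z}(2^n)$ presupposes exactly this support reduction.

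The finite case also has a gap: your induction is not closed under your own reduction. From $X=\mathbb{Z}(2)\times\mathbb{Z}(4)$ you get $X_1\cong\mathbb{Z}(2)^2$. Its automorphism group $GL_2(\mathbb{F}_2)$ permutes the three nonzero characters transitively, so no nonzero character is fixed. Worse, the statement is false on $\mathbb{Z}(2)^2$ with arbitrary automorphisms. Take the Haar measures of its three subgroups of order $2$, with $\beta_1=I$ and suitable $\beta_2,\beta_3$. Using $\beta_j\times I$, it is then also false on $\mathbb{Z}(2)^2\times\mathbb{Z}(4)$, a group which \emph{does} have an $\operatorname{Aut}$-fixed character of order $2$. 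So ``finite $2$-groups with a fixed order-$2$ character'' is not a usable induction class.

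What rescues the descent is that only restrictions of elements of $\operatorname{Aut}(X)$ for the \emph{original} $X$ ever occur. For $X$ as in (\ref{ne10}), $\operatorname{Aut}(X)$ is a $2$-group, and this is where all of $m_1<\dots<m_k$ is needed, not only $m_{k-1}<m_k$. Hence $Y$ has a chain $0=Y_0\subset\dots\subset Y_N=Y$ of $\operatorname{Aut}$-invariant subgroups with $|Y_{i+1}/Y_i|=2$, and every $\widetilde\beta$ acts trivially on each factor $Y_{i+1}/Y_i$. Your substitution $u=v=y$ with $y\in Y_{i+1}\setminus Y_i$ then works inductively, because $\widehat\nu_j$ is $Y_i$-periodic and $y+\widetilde\beta_jy\in Y_i$. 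Either supply this argument, or do as the paper does: group the $\zeta_j$ by the finitely many distinct $\beta_j$ and invoke the finite-sum theorem of \cite{Fe10}.

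Your argument for $X=\Delta_2$ is correct. It is also more self-contained than the paper's Step~3, which relies on the finite case for $\mathbb{Z}(2^k)$.
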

\begin{proof}
Denote by $Y$ the character group of the group $X$.

1. Assume that the group $X$ is of the form~(\ref{ne10}).
By Lemma~\ref{l1}, it follows from conditions~(i) and~(ii) that the characteristic functions
$\widehat\mu_j(y)$ satisfy equation~(\ref{e2}).
Set $\nu_j = \mu_j * \bar\mu_j$. Then $\widehat\nu_j(y) = |\widehat\mu_j(y)|^2 \ge 0$ for all $y \in Y$.
The characteristic functions $\widehat\nu_j(y)$ also satisfy equation~(\ref{e2}).
Denote by $\zeta_j$ the independent random variables taking values in the group $X$
with distributions $\nu_j$. We may assume, without loss of 
generality, that $\alpha_j = I$, $j = 1, 2, \dots$.
The series   
$\sum\limits_{j=1}^\infty\zeta_j$ and $\sum\limits_{j=1}^\infty\beta_j\zeta_j$
converge  in distribution.
Let 
\begin{equation}
\label{08.02.26.2}
M_1=\sum\limits_{j=1}^\infty\zeta_j\  \text{and} \  M_2=\sum\limits_{j=1}^\infty\beta_j\zeta_j.
\end{equation}
 By Lemma~\ref{l1}, the linear forms $M_1$ and $M_2$ are independent.
Since $X$ is a finite group, the group $\operatorname{Aut}(X)$ is also finite.
This implies that there are only finitely many distinct automorphisms $\beta_j$.
By renumbering the finite number of the random variables $\zeta_j$, we may assume
that these distinct automorphisms are $\beta_1, \beta_2, \dots, \beta_n$.
We now introduce a new numbering for the random variables $\zeta_j$. Namely, for
each fixed $k$, $k = 1, 2, \dots, n$, denote by $\zeta_{l,k}$, $l = 1, 2, \dots$, 
the random variables $\zeta_j$ which have coefficient $\beta_k$ in the series
$\sum_{j=1}^\infty \beta_j \zeta_j$. Without loss of generality and to 
simplify the notation, we assume that the number 
of such random variables $\zeta_{l,k}$ is infinite for each $k$. The case where 
this number is finite for some $k$ is treated analogously.
Every of the series 
 $\sum\limits_{l=1}^\infty\zeta_{l, k}$,
$k=1, 2, \dots, n$, converges in distribution.
Denote by $\eta_k$ its sum, i.e., 
\begin{equation*}
\eta_k=\sum_{l=1}^\infty\zeta_{l, k}, \quad k=1, 2,\dots, n.
\end{equation*}
Then 
$$M_1=\sum\limits_{k=1}^n\eta_k\  \text{and} \  
M_2=\sum\limits_{k=1}^n\beta_k\eta_k.$$
We have two independent linear forms $M_1$ and $M_2$ of a finite number of independent
random variables $\eta_k$ taking values in a group of the form~(\ref{ne10}). 
As shown in~\cite{Fe10}, it follows that all $\eta_k$ have degenerate distributions,
and consequently, all $\nu_j$ are degenerate distributions. This, in turn, implies that
all $\mu_j$ are degenerate distributions.

2. Let $X={\mathbb Z}(2^\infty)$.
As   $Y$ is topologically isomorphic to the group $\Delta_2$, 
to avoid introducing new notation, suppose $Y=\Delta_2$.   
  By Lemma~\ref{l1}, it follows from
conditions (i) and (ii) that the characteristic functions  $\widehat\mu_j(y)$ 
satisfy equation~(\ref{e2}) and 
the infinite products in the both sides of equation~(\ref{e2})
 converge uniformly on the group $\Delta_2^2$. 
 
Arguing as in the proof of Step~1, it suffices prove the proposition assuming that 
 $\widehat\mu_j(y)\ge 0$ for all $y \in \Delta_2$, $j = 1, 2, \dots$. 
The family of the subgroups
 $\{2^k\Delta_2\}_{k=0}^{\infty}$ forms an open 
 basis at zero of the group $\Delta_2$. It follows from this that 
 we can take a natural number $m$ in such a way that 
 $\widehat\mu_{L_i}(y)> 0$ for all
 $y\in 2^m\Delta_2$, $i=1, 2$. Each topological automorphism 
 of the group $\Delta_2$ is a multiplication
 by an invertible element of the ring $\Delta_2$
 (\!\!\cite[(26.18(e))]{HeRo1}). This implies that 
 all subgroups $2^k\Delta_2$ of the group $\Delta_2$ are
 characteristic. Consider the restriction of 
 equation~(\ref{e2}) to the subgroup $2^m\Delta_2$. 
 Setting  $f_j(y) = - \ln{\widehat \mu_j(y)}$, 
 $y\in 2^m\Delta_2$,  and  
 arguing as in the proof of Lemma~\ref{th1},
 we make sure that  $\widehat\mu_j(y)=1$  for all $y \in 2^m\Delta_2$, $j=1, 2, \dots$. 
 By Lemma~\ref{l4}, all distributions $\mu_j$ are supported in
 the annihilator $A({\mathbb Z}(2^\infty), 2^m\Delta_2)={\mathbb Z}(2^m)$, i.e.,  
 the random variables  $\xi_j$ take values in ${\mathbb Z}(2^m)$.
 The subgroup ${\mathbb Z}(2^m)$ of the group ${\mathbb Z}(2^\infty)$ 
 is characteristic. Thus the proof  
 reduces to the
 case when $X={\mathbb Z}(2^m)$, i.e., to Step~1.
  
 3. Let $X=\Delta_2$. 
Since $Y$ is topologically isomorphic to the group 
${\mathbb Z}(2^\infty)$, 
to avoid introducing new notation, suppose $Y={\mathbb Z}(2^\infty)$.
By Lemma~\ref{l1}, it follows from
conditions (i) and (ii) that the characteristic functions  $\widehat\mu_j(y)$ 
satisfy equation~(\ref{e2}).

Arguing as in the proof of Step~1, it suffices prove the proposition assuming that 
 $\widehat\mu_j(y)\ge 0$ for all $y \in {\mathbb Z}(2^\infty)$, $j = 1, 2, \dots$. 
 The subgroups ${\mathbb Z}(2^k)$, $k=0, 1, 2, \dots$, 
  of the group ${\mathbb Z}(2^\infty)$ are characteristic.
  Consider the restriction of equation~(\ref{e2}) to the subgroup
  ${\mathbb Z}(2^k)$. By Step~1, the statement of the proposition is true
  for the group ${\mathbb Z}(2^k)$. Taking into account that $\widehat\mu_j(y)\ge 0$ 
  and Lemma~\ref{l1}, this implies that 
  $\widehat\mu_j(y)=1$  for all $y \in {\mathbb Z}(2^k)$, $k=0, 1, 2, \dots$.
By Lemma~\ref{l4}, each of the distributions $\mu_j$ is supported in
the annihilators $A(\Delta_2, {\mathbb Z}(2^k))=2^k\Delta_2$,  $k=0, 1, 2, \dots$.
Since $\bigcap\limits_{k=0}^\infty 2^k\Delta_2=\{0\}$, we obtain that
$\mu_j=E_0$, $j=1, 2, \dots$.
\end{proof}

In \cite{R} B.~Ramachandran proved the following theorem.
\begin{R}{\rm(\!\!\cite[Theorem 8.2.2]{R})}
Let $\alpha_j$, $\beta_j$, $j=1, 2, \dots$, be nonzero real numbers, 
and let $\xi_j$ be 
real-valued independent random variables  with distributions $\mu_j$. 
Assume that the following conditions hold:
\renewcommand{\labelenumi}{\rm(\roman{enumi})}
\begin{enumerate}
\item	
$L_1=\sum\limits_{j=1}^\infty \alpha_j\xi_j$ and 
$L_2=\sum\limits_{j=1}^\infty \beta_j\xi_j$;
\item
the linear forms $L_1$ and $L_2$ are independent;
\item	
 the sequences $\{\alpha_j\beta_j^{-1}\}$   and $\{\beta_j\alpha_j^{-1}\}$
are bounded.
\end{enumerate}
Then all $\mu_j$ are Gaussian distributions.
\end{R}

In the final part of the paper, we prove some group analogues of this theorem. 
We recall the definition of the Gaussian distribution on a second-countable 
locally compact Abelian group $X$. Let $Y$ be the character group of $X$. 

A distribution $\gamma$ on $X$ is called Gaussian 
if its characteristic function can be represented as
	\begin{equation*}\label{fe22.3}\widehat\gamma(y)=
	(x,y)\exp\{-\varphi(y)\}, \quad y \in Y,
\end{equation*}
where $x\in X$  and $\varphi(y)$ is a continuous nonnegative	
function on the group $Y$ satisfying the equation
\begin{equation*}\label{fe22.4} 
	\varphi(u+v)+\varphi(u-v)=2[\varphi(u)+\varphi(v)], \quad  u, v
\in Y.
\end{equation*}

We note that a distribution $\gamma$ on an 
${\bm a}$-adic solenoid $\Sigma_{\bm a}$ is Gaussian if its characteristic 
function is represented in the form 
$$
\widehat\gamma(y)=(x, y)\exp\{-\sigma y^2\}, \quad y\in H_{\bm a},
$$
where $x\in \Sigma_{\bm a}$, $\sigma\ge 0$. 

The first lemma is a group analogue of the
Skitovich--Darmois theorem.

\begin{lemma} [\!\!\protect\cite{Fe16}]\label{s10.2} 
 Let $X$ be a second-countable locally compact Abelian group that 
 contains no subgroups topologically isomorphic to the circle group 
 $\mathbb{T}$. Let  $\alpha_j$,  $\beta_j$, 
$j = 1, 2,\dots, n$,  $n \ge 2$, be topological automorphisms 
of $X$. Let
$\xi_j$ be independent random variables taking values in $X$  with
distributions  $\mu_j$ having nonvanishing characteristic
functions.  If the linear forms $L_1 =
\alpha_1\xi_1 + \dots + \alpha_n\xi_n$ and $L_2 
= \beta_1\xi_1 +
\dots + \beta_n\xi_n$ are independent, then   
 all  $\mu_j$  are Gaussian distributions.
\end{lemma}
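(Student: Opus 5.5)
We would follow the standard finite-difference scheme for the Skitovich--Darmois theorem, transplanted to the character group $Y$ of $X$, and then invoke a group version of the Marcinkiewicz theorem.

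\textbf{Reduction to a functional equation.} Replacing $\xi_j$ by $\alpha_j\xi_j$, we may assume $\alpha_j=I$. By Lemma~\ref{l1} (with finitely many factors) we get
$$\prod_{j=1}^n\widehat\mu_j(u+\widetilde\beta_j v)=\prod_{j=1}^n\widehat\mu_j(u)\prod_{j=1}^n\widehat\mu_j(\widetilde\beta_j v),\quad u,v\in Y.$$
We pass to $\nu_j=\mu_j*\bar\mu_j$. Then $\widehat\nu_j=|\widehat\mu_j|^2$ is strictly positive, so $\psi_j=-\ln\widehat\nu_j$ is continuous, nonnegative and satisfies
$$\sum_{j=1}^n\psi_j(u+\widetilde\beta_j v)=A(u)+B(v),$$
where $A=\sum_j\psi_j$ and $B(v)=\sum_j\psi_j(\widetilde\beta_j v)$.

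\textbf{Finite differences.} We eliminate the summands one at a time. Fix $k_1\in Y$ and replace $u$ by $u-\widetilde\beta_n k_1$ and $v$ by $v+k_1$; this leaves the argument of $\psi_n$ unchanged. Subtracting, $\psi_n$ drops out and the right-hand side becomes a sum of a function of $u$ and a function of $v$. Repeating with $\psi_{n-1},\dots,\psi_2$, and finally differencing in $u$ to kill the function of $u$ and setting $v=0$, we obtain
$$\Delta_{(\widetilde\beta_2-\widetilde\beta_1)k_2}\cdots\Delta_{(\widetilde\beta_n-\widetilde\beta_1)k_n}\Delta_h\psi_1(y)=0$$
for all $y,h,k_2,\dots,k_n\in Y$, and similarly for every $\psi_j$. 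The $\psi_j$ are thus polynomial-type functions, but only along the subgroups $(\widetilde\beta_i-\widetilde\beta_j)(Y)$.

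\textbf{Main obstacle: the images $(\widetilde\beta_i-\widetilde\beta_j)(Y)$.} These need not be all of $Y$. For $X=\mathbb T$, $Y=\mathbb Z$ and $\beta=2I$, the image is $2\mathbb Z$, and the classical counterexamples live exactly there. The hypothesis that $X$ contains no subgroup topologically isomorphic to $\mathbb T$ is equivalent, by duality, to $Y$ having no quotient isomorphic to $\mathbb Z$. We would use it as follows.
\begin{itemize}
\item Let $Y_0$ be the closed subgroup generated by the images of the differences. On $Y_0$, where the difference identity holds for all steps, $\psi_j$ is a continuous polynomial.
\item The Marcinkiewicz-type theorem on groups (a continuous polynomial $\psi$ with $e^{-\psi}$ positive definite has degree at most $2$) then gives Gaussianity of $\nu_j$ on $Y_0$.
\item To pass from $Y_0$ to $Y$, we would show that the finite-difference relations, combined with nonvanishing and positive definiteness, force $\psi_j$ to satisfy the Gaussian equation $\varphi(u+v)+\varphi(u-v)=2[\varphi(u)+\varphi(v)]$ on all of $Y$.
\item The absence of $\mathbb Z$-quotients is exactly what should rule out a nontrivial "periodic" obstruction on $Y/Y_0$, such as the $2\mathbb Z\subset\mathbb Z$ example.
\end{itemize}
This extension is where we expect the real work to lie. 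The first thing we would try is a component-by-component treatment via the structure theorem, $Y=\mathbb R^m\times Y_1$ with $Y_1$ containing an open compact subgroup, handling the compact and torsion parts with the Haar-measure averaging trick of Lemma~\ref{th1}.

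\textbf{Conclusion.} Once each $\nu_j$ is Gaussian, each $\mu_j$ is a factor of a Gaussian distribution. The group analogue of Cram\'er's theorem, valid for groups without $\mathbb T$-subgroups, then shows that every $\mu_j$ is Gaussian.
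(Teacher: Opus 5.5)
The paper gives no proof of this lemma; it is quoted from \cite{Fe16} and used as a black box. Measured against a complete argument, your proposal has a genuine gap, and it is exactly the step you defer: passing from the subgroups $(\widetilde\beta_i-\widetilde\beta_1)(Y)$ to all of $Y$. That step is the whole content of the lemma, and it cannot come from the functional equation alone, even when $X$ has no subgroup isomorphic to $\mathbb T$.

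Here is a concrete case. Take $X=\Sigma_{\bm a}$ with ${\bm a}=(3,3,\dots)$, so $Y=\mathbb Z[1/3]$, which has no $\mathbb Z$-quotient. Take $\beta_1=I$, $\beta_2=-I$, so that $Y_0=2\mathbb Z[1/3]$ and $Y/Y_0\cong\mathbb Z/2$. Let $\chi$ be the indicator of the nontrivial coset of $Y_0$, and set $\psi_{1,2}(y)=ay^2\pm b\chi(y)$. These satisfy $\psi_1(u+v)+\psi_2(u-v)=\psi_1(u)+\psi_2(u)+\psi_1(v)+\psi_2(v)$ identically. Yet for $b\ne0$ they are not quadratic.

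So your heuristic that the absence of $\mathbb Z$-quotients rules out a periodic obstruction on $Y/Y_0$ is false at the level of the equation. Only positive definiteness excludes this pair. If $b>0$ (symmetrically if $b<0$), then $e^{-\psi_2(3^{-N})}=e^{b-a9^{-N}}>1$ for large $N$, which is impossible for a positive definite function.

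On $Y=\mathbb Z$, i.e.\ $X=\mathbb T$, the same pair with $a$ large is a genuine counterexample. There the odd coset stays away from $0$, and the Fourier series of $e^{-an^2\pm b\chi(n)}$ is positive. So the hypothesis on $X$ must enter through an interplay between positive definiteness and the topology or divisibility of $Y$, and the proposal supplies no such argument. (Your $\mathbb T$ example should read $\widetilde\beta_2-\widetilde\beta_1=-2I$ with $\beta_2=-\beta_1$, since $2I\notin\operatorname{Aut}(\mathbb T)$.)

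The structure-theorem route does not close the gap either, because the $\widetilde\beta_j$ need not respect $Y=\mathbb R^m\times Y_1$. Averaging does work on the characteristic subgroup $b_Y$ of compact elements: for $y\in b_Y$, integrate in $v$ over the compact subgroup generated by the $\widetilde\beta_j^{-1}y$. This gives $\psi_j=0$ on $b_Y$, so by Lemma~\ref{l4} each $\nu_j$ is supported in the connected component of $X$. But that leaves exactly the case $Y/b_Y\cong\mathbb R^m\times D$ with $D$ discrete and torsion-free. The example above lives there, and there is nothing compact to average over.

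There are also two smaller errors in what you did write.
\begin{enumerate}
\item Your difference identity is off by one. After eliminating $\psi_n,\dots,\psi_2$, the function $g=\Delta_{(\widetilde\beta_1-\widetilde\beta_2)k_2}\cdots\Delta_{(\widetilde\beta_1-\widetilde\beta_n)k_n}\psi_1$ satisfies $g(u+\widetilde\beta_1v)=a(u)+b(v)$. This only makes $g-g(0)$ additive, so a further $\Delta_{h'}$ is needed. As written, your identity already fails for $\psi_1(y)=y^2$ on $\mathbb R$ with $\widetilde\beta_2-\widetilde\beta_1=-2$, where the left side equals $-4kh$.
\item A relation with one step from each $K_i=(\widetilde\beta_i-\widetilde\beta_1)(Y)$ does not make $\psi_1$ a polynomial on the closed subgroup the $K_i$ generate. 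On $\mathbb R^2$ with $K_2=\mathbb R\times\{0\}$ and $K_3=\{0\}\times\mathbb R$, every $f(s)+g(t)$ satisfies it. Only the intersection of the closures of the $K_i$ is safe, and it may be trivial.
\end{enumerate}
The opening reductions and the final appeal to Lemma~\ref{ns10.2} are fine.
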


The second lemma is a group analogue of Cram\'er's decomposition theorem for 
the Gaussian distribution on the real line.

\begin{lemma}[{\!\!\protect\cite[Theorem 4.6]{FeBook}}]
 \label{ns10.2} 
 Let $X$ be a second-countable locally compact Abelian group that 
 contains no subgroups topologically isomorphic to the circle group 
 $\mathbb{T}$. Let  $\mu$ be a Gaussian distribution
on $X$, and let $\mu=\mu_1*\mu_2$, where $\mu_1$ and $\mu_2$ 
are probability distributions.
Then  $\mu_j$ are Gaussian distributions.
\end{lemma}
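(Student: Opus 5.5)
The proof works at the level of characteristic functions. First symmetrize, then reduce to a finite-dimensional connected quotient, and finally invoke the classical Cramér theorem on $\mathbb{R}^m$ through the representation of Gaussian characteristic functions by quadratic forms.

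\textbf{Step 1 (symmetrization and removing the degenerate part).} Write $\widehat\mu(y)=(x,y)\exp\{-\varphi(y)\}$; after a shift we may assume $x=0$. Put $\nu_j=\mu_j*\bar\mu_j$. Then $\widehat\nu_1(y)\widehat\nu_2(y)=\exp\{-2\varphi(y)\}$ with $0\le\widehat\nu_j\le 1$. Hence
\[
\exp\{-2\varphi(y)\}\le \widehat\nu_j(y)\le 1 ,
\]
so both $\widehat\nu_j$ are positive. The set $B=\{y\in Y:\varphi(y)=0\}$ is a closed subgroup, since $\varphi$ is a continuous nonnegative solution of the parallelogram equation. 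On $B$ we have $\widehat\nu_j=1$, so by Lemma~\ref{l4} each $\nu_j$ is supported in $A(X,B)$. Consequently each $\mu_j$ is supported in a coset of $A(X,B)$. Replacing $\mu_j$ by suitable shifts, we may assume $\mu,\mu_1,\mu_2$ all live on $K=A(X,B)$. The character group of $K$ is $Y/B$, and $\varphi$ descends to $Y/B$, where it vanishes only at $0$.

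\textbf{Step 2 (structure of $K$).} By the standard description of Gaussian characteristic functions, there is a continuous homomorphism $p:Y/B\to\mathbb{R}^m$ and a positive definite quadratic form $Q$ with $\varphi=Q\circ p$; moreover $p$ is injective. This forces $Y/B$ to be torsion-free. Together with the hypothesis that $X$, and hence $K$, has no subgroup topologically isomorphic to $\mathbb{T}$, it also excludes compact elements of $Y/B$ that could produce a circle factor. The conclusion I aim for is that $K$ is connected, that $Y/B$ embeds continuously into $\mathbb{R}^m$, and that $Y/B$ is the union of its finitely generated (finite-rank) subgroups $F$ whose $p$-images span subspaces of $\mathbb{R}^m$.

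\textbf{Step 3 (Cramér on finite-dimensional pieces).} Set $\psi_j=-\ln\widehat\nu_j\ge 0$. Then $\psi_1+\psi_2=2\varphi$, so $0\le \psi_j\le 2\varphi$. We must show each $\psi_j$ satisfies the equation $\psi(u+v)+\psi(u-v)=2[\psi(u)+\psi(v)]$. For this I would pass to the quotients of $K$ dual to the finite-rank subgroups $F$; these are finite-dimensional connected groups without $\mathbb{T}$-subgroups, that is, products of $\mathbb{R}^a$ with solenoidal factors. On such a quotient, $F\otimes\mathbb{R}\cong\mathbb{R}^{a'}$ carries the image of $\nu_j$ as a measure on a vector space up to a totally disconnected kernel. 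The bound $\widehat\nu_j\ge\exp\{-2\varphi\}$ gives finite second moments, via the usual estimate $\int(1-\cos)\,d\nu_j\le 2\varphi$ along multiples $ny$. From this, the classical Cramér theorem in $\mathbb{R}^{a'}$ applies: the image of $\nu_j$ is a factor of a Gaussian, so $\psi_j$ is quadratic on each $F$. Compatibility between the pieces and continuity then give the equation on all of $Y/B$. Hence the $\nu_j$ are Gaussian, and finally the $\mu_j$ are Gaussian, since a distribution whose symmetrization is Gaussian and which is supported on $K$ has characteristic function $(x_j,y)\exp\{-\tfrac12\psi_j(y)\}$ by the same argument applied to $\widehat\mu_j$.

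\textbf{Main obstacle.} The hardest step is the lift from $\mathbb{T}$-valued characters to real-valued linear functionals in Step 3. Each character maps $\nu_j$ to the circle, where Cramér's theorem fails; there are non-Gaussian factors of wrapped normals. The absence of $\mathbb{T}$-subgroups in $X$ is exactly what must be used to realize $\nu_j$ on a finite-dimensional real vector space quotient, rather than on a torus. I would try first to prove that the restriction of $\psi_j$ to each cyclic subgroup $\{ny\}$ extends to a quadratic polynomial in $n$, using the two-sided bound $0\le\psi_j\le2\varphi$ together with positive definiteness of $\widehat\nu_j$, exactly as in the proof of Cramér's theorem via Hadamard factorization.
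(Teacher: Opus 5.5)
Your Step~1 is fine, but Steps~2 and~3 do not go through. First, the finite-dimensional representation $\varphi=Q\circ p$ with $p\colon Y/B\to\mathbb{R}^m$ is false in general, because Gaussian distributions on groups without $\mathbb{T}$ can be infinite-dimensional. For example, $\Sigma_{\bm a}^{\mathbb{N}}$ contains no $\mathbb{T}$, since every homomorphism $\bigoplus_k H_{\bm a}\to\mathbb{Z}$ is zero. Yet $\varphi(y)=\sum_k y_k^2$ on $\bigoplus_k H_{\bm a}$ has infinite rank.

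More seriously, restricting $\widehat\nu_j$ to a subgroup $F$ of $Y/B$ (cyclic, finitely generated, or of finite rank) means pushing $\nu_j$ forward to a quotient of $K$, or to the dual of $F$ with the discrete topology. Quotients of a group without $\mathbb{T}$ can contain $\mathbb{T}$: consider $\mathbb{R}\to\mathbb{T}$, or $\Sigma_{\bm a}\to\mathbb{T}$ dual to $\mathbb{Z}\subset H_{\bm a}$. Even a pure finite-rank $F$ can have $\operatorname{Hom}(F,\mathbb{Z})\neq 0$ while $\operatorname{Hom}(Y/B,\mathbb{Z})=0$. So your claim that the pieces are ``finite-dimensional connected groups without $\mathbb{T}$-subgroups'' is wrong. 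The phrase ``carries the image of $\nu_j$ as a measure on a vector space up to a totally disconnected kernel'' hides exactly the step that needs proof.

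Your fallback (work on $\{ny\}$, Hadamard-style) cannot succeed either. What you keep on a cyclic subgroup is precisely a factorization problem on $\mathbb{T}$, where the statement is false. Take $\widehat\gamma(n)=e^{-\sigma n^2}$ on $\mathbb{Z}$, $\mu_2=(1-\varepsilon)E_0+\varepsilon m_{\mathbb{T}}$, and $\widehat\mu_1=\widehat\gamma/\widehat\mu_2$. The density of $\mu_1$ is $(w-\varepsilon)/(1-\varepsilon)$, where $w$ is the wrapped normal density, which is bounded below by a positive constant. So for small $\varepsilon$ both factors are probability measures with positive characteristic functions whose product is Gaussian. Yet $-\ln\widehat\mu_2(n)=-\ln(1-\varepsilon)$ for $n\neq 0$ is not quadratic. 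Hence positivity, positive definiteness and $0\le\psi_j\le 2\varphi$ cannot force quadraticity; on $\mathbb{Z}$ there is no analytic continuation to feed Hadamard's theorem.

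The missing idea is to use the absence of $\mathbb{T}$ on the connected group $K$ itself, not on its quotients. Let $V$ be the space of continuous homomorphisms $Y/B\to\mathbb{R}$; this is $\mathbb{R}^d$ with $d\le\aleph_0$ and the product topology. Define $e\colon V\to K$ by $(e(v),y)=e^{iv(y)}$. Then $\ker e=2\pi\operatorname{Hom}(Y/B,\mathbb{Z})$, which is trivial exactly because $K$ contains no $\mathbb{T}$.

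With this in hand the argument runs as follows.
\begin{enumerate}
\item $\mu=e(\Gamma)$ for a Gaussian $\Gamma$ on $V$.
\item Since $\mu$ is concentrated on the Borel subgroup $e(V)$, the identity $\mu_1*\mu_2=\mu$ forces $\mu_1$ and $\mu_2$ to be concentrated on cosets of $e(V)$.
\item After shifting, they lift through the Borel isomorphism $e^{-1}$ to measures $M_1,M_2$ on $V$ with $M_1*M_2=\Gamma$.
\item The classical Cram\'er theorem, applied to the finite-dimensional projections of $V$, shows that $M_1$ and $M_2$ are Gaussian, so each $\mu_j=e(M_j)$ is Gaussian.
\end{enumerate}
This also removes your final step from $\nu_j$ to $\mu_j$, which as written is itself an instance of the lemma.
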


For a locally compact Abelian group $X$ and a nonzero integer $n$, 
denote by $f_n \colon X \to X$ 
the endomorphism of $X$ defined by
$f_n(x) = n x$, $x \in X$. 
Consider an ${\bm a}$-adic solenoid $\Sigma_{\bm a}$.
 Each topological automorphism $\alpha\in {\rm
Aut}(\Sigma_{\bm a})$
 is of the form
$\alpha = f_m f_n^{-1}$ for some mutually prime $m$ and $n$,
where $f_m, f_n \in {\rm
Aut}(\Sigma_{\bm a})$. We identify $\alpha
=  f_m f_n^{-1}$ with the rational number $\frac{m}{n}$. 

The following statement can be regarded as an analogue of Ramachandran’s theorem for 
${\bm a}$-adic solenoids.

\begin{proposition}\label{np1}
Let $\Sigma_{\bm a}$ be an ${\bm a}$-adic solenoid such that  there exists a
unique prime number $p$ such that $\Sigma_{\bm a}$ contains no elements of order $p$.
Let $\alpha_j, \beta_j$, $j = 1, 2, \dots$, be topological automorphisms of
$\Sigma_{\bm a}$, and let $\xi_j$ be independent random variables taking values in
$\Sigma_{\bm a}$  with distributions $\mu_j$.
Assume that the following conditions hold:
\renewcommand{\labelenumi}{\rm(\roman{enumi})}
\begin{enumerate}

\item

$L_1=\sum\limits_{j=1}^\infty\alpha_j\xi_j$ and $L_2=\sum\limits_{j=1}^\infty\beta_j\xi_j$;

\item

the linear forms $L_1$ and $L_2$ are independent;

\item	

the characteristic functions $\widehat\mu_{L_1}(y)$ and $\widehat\mu_{L_2}(y)$ do not vanish;

\item	
 the sequences $\{\alpha_j\beta_j^{-1}\}$   and $\{\alpha_j^{-1}\beta_j\}$
are bounded.
  
\end{enumerate}
Then all $\mu_j$ are Gaussian distributions.
\end{proposition}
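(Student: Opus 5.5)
Throughout, $Y=H_{\bm a}\subset\mathbb{Q}$ is the character group, each $\widetilde\alpha_j,\widetilde\beta_j$ acts on $H_{\bm a}$ as multiplication by the same rational number, and we normalize $\alpha_j=I$ by passing to $\kappa_j=\alpha_j\xi_j$. Condition (iv) then says that the rationals $\beta_j$ are bounded and bounded away from zero. The hypothesis on $p$ matters as follows. $\Sigma_{\bm a}$ has no elements of order $q$ exactly when $f_q$ is an automorphism, i.e. when $1/q\in H_{\bm a}$. So "a unique such $p$" means exactly one prime $p$ is invertible in $H_{\bm a}$. Consequently $H_{\bm a}$ contains $\mathbb{Z}[1/p]$, and the automorphism group of $\Sigma_{\bm a}$ is small: every $\beta_j$ is of the form $\pm p^{k_j}$ (numerator and denominator must be coprime and invertible). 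Since the $\beta_j$ are bounded and bounded away from zero, the exponents $k_j$ are bounded. Hence only finitely many distinct automorphisms $\beta_j$ occur. This is the key use of (iv), replacing Ibragimov's more delicate condition.

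With finitely many distinct coefficients, I copy Step~1 of the proof of Theorem~\ref{th5}. Let $\beta^{(1)},\dots,\beta^{(n)}$ be the distinct values, and group the $\xi_j$ with $\beta_j=\beta^{(k)}$ into $\eta_k=\sum_l \xi_{l,k}$. Here one must check that each subseries converges in distribution. On the real line this is automatic; here I would argue via characteristic functions. By (iii), $\prod_j\widehat\mu_j(y)$ does not vanish, so after symmetrizing ($\nu_j=\mu_j*\bar\mu_j$, $\widehat\nu_j\ge 0$) each sub-product is bounded between the full product and $1$. I would then show it converges: the partial products are monotone decreasing pointwise, and the limit is continuous, hence positive definite. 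Continuity of the limit on the discrete group $H_{\bm a}$ is automatic, and the limit equals $1$ at $0$, so by Bochner it is a characteristic function on the compact group $\Sigma_{\bm a}$. Pointwise convergence on discrete $Y$ is the uniform-on-compacts convergence needed. Then $M_1=\sum_k\eta_k$ and $M_2=\sum_k\beta^{(k)}\eta_k$ are independent by Lemma~\ref{l1}, and the $\eta_k$ have nonvanishing characteristic functions, since each is a factor of the nonvanishing $\widehat\mu_{L_1}$. If $n\ge2$, Lemma~\ref{s10.2} applies ($\Sigma_{\bm a}$ contains no copy of $\mathbb{T}$, since its dual is torsion-free of rank one and not $\mathbb{Z}$; more directly, a closed circle subgroup would give a quotient of $H_{\bm a}$ onto $\mathbb{Z}$, impossible because $H_{\bm a}$ is $p$-divisible). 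So every $\eta_k$ is Gaussian, or rather the symmetrized version is. If $n=1$, then $L_2=\beta^{(1)}L_1$ is independent of $L_1$, which forces $L_1$ to be degenerate, and Lemma~\ref{ns10.2} covers this case trivially.

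Finally each $\mu_j$ (or $\nu_j$) is a convolution factor of the Gaussian distribution of $\eta_k$: we have $\eta_k=\xi_j+(\text{independent rest})$. Lemma~\ref{ns10.2} then gives that $\mu_j$ is Gaussian. If we symmetrized, we get that $\nu_j$ is Gaussian, and $\mu_j$ is Gaussian by another application of Lemma~\ref{ns10.2} to $\nu_j=\mu_j*\bar\mu_j$.

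The main obstacle I expect is the first step: rigorously deriving that the unique-$p$ hypothesis together with (iv) forces the $\beta_j$ into a finite set. This requires describing $\operatorname{Aut}(\Sigma_{\bm a})$ as multiplication by $\pm p^k$. If $H_{\bm a}$ allowed more invertible primes, the coefficient set could be infinite even under (iv), which is presumably why the hypothesis is there. A secondary technical point is the convergence of the regrouped subseries, which I handle via (iii) and symmetrization as above.
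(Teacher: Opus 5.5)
Your proposal is correct and follows the paper's proof essentially step for step. You normalize $\alpha_j=I$, use the unique prime $p$ to get $\operatorname{Aut}(\Sigma_{\bm a})=\{\pm f_{p^m}^{\pm1}\}$ so that (iv) leaves only finitely many distinct $\beta_j$, regroup the symmetrized variables into finitely many $\eta_k$ with nonvanishing characteristic functions, and finish with Lemma~\ref{s10.2} and Lemma~\ref{ns10.2}. Your checks of the convergence of the regrouped subseries and of the single-coefficient case fill in details the paper leaves implicit.

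One small correction: $f_q\in\operatorname{Aut}(\Sigma_{\bm a})$ is equivalent to $H_{\bm a}$ being $q$-divisible (i.e.\ $q$ divides infinitely many $a_j$), not merely to $1/q\in H_{\bm a}$. For example, with ${\bm a}=(2,3,3,\dots)$ one has $1/2\in H_{\bm a}$, yet $\Sigma_{\bm a}$ has elements of order $2$. This does not affect your argument, which only uses the correct equivalence with the absence of elements of order $q$.
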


\begin{proof}
Note that, although this fact will not be used in the proof, there exists 
a unique prime number $p$ such that an ${\bm a}$-adic solenoid $\Sigma_{\bm a}$ 
contains no elements of order $p$ if and only if there exists a unique prime number 
$p$ that divides infinitely many of the numbers $a_j$.

We may assume, without loss of generality, that $\alpha_j = I$ for all
$j = 1, 2, \dots$.
Then condition~(iv) is transformed into the following condition:
\begin{equation}
\label{08.02.26.1}
\text{the sequences}\ \{\beta_j^{-1}\}\ \text{and}\ \{\beta_j\}
\ \text{are bounded}.
\end{equation}

Since $\Sigma_{\bm a}$ is a connected compact Abelian group, the mapping $f_n$ is an
epimorphism for each nonzero integer $n$. As $\Sigma_{\bm a}$ contains
 no elements of order
$p$, we conclude that $f_p$ is a monomorphism. Taking into account that $f_p$ is also an
epimorphism, we obtain $f_p \in \operatorname{Aut}(\Sigma_{\bm a})$.
This implies that $f_p\in \operatorname{Aut}(H_{\bm a})$. It means that both 
$\Sigma_{\bm a}$ and $H_{\bm a}$ are
groups with unique division by $p$. 
Since $p$ is the unique prime number such that $\Sigma_{\bm a}$ contains no elements
of order $p$, it follows that if $q$ is a prime number with $q \ne p$, then
$f_q \notin \operatorname{Aut}(\Sigma_{\bm a})$. 
Taking into account the form of a topological automorphisms of the group
$\Sigma_{\bm a}$, we conclude that
 every topological automorphism
of $\Sigma_{\bm a}$ is of the form $\pm f_{p^m}^{\pm 1}$, where $m$ is a
nonnegative integer. It follows from this that there exist only
finitely many distinct topological 
automorphisms $\beta_j$ satisfying condition~(\ref{08.02.26.1}).

We proceed with the argument in the same way as in the proof of Step~1 of
Theorem~\ref{th5}, retaining the notation used there. It follows from the above
that the series in~(\ref{08.02.26.2}) contains only finitely many distinct 
topological automorphisms
$\beta_j$.
Thus, we obtain two independent linear forms $M_1$ and $M_2$ of a finite number of
independent random variables $\eta_k$ taking values in the ${\bm a}$-adic 
solenoid $\Sigma_{\bm a}$. Condition (iii) of the proposition implies
that all the characteristic functions $\widehat\mu_{\eta_k}(y)$ do not vanish. 
Since any ${\bm a}$-adic solenoid is a second-countable locally compact Abelian group
containing no subgroups topologically isomorphic to the circle group $\mathbb{T}$,
it follows from Lemma~\ref{s10.2} that all distributions
$\mu_{\eta_k}$ are Gaussian. By Lemma~\ref{ns10.2}, this implies that 
all distributions $\nu_j$, and
hence all distributions $\mu_j$  are also Gaussian.
\end{proof}

The proof of Proposition~\ref{np1} is based on Lemmas~\ref{s10.2} and~\ref{ns10.2}, 
which hold for any second-countable locally compact Abelian group that contains 
no subgroups topologically isomorphic to the circle group $\mathbb{T}$. 
Furthermore, condition~(iv) of Proposition~\ref{np1} was used only to ensure that 
the set of distinct topological automorphisms appearing in the linear forms under 
consideration is finite.
Taking this into account, the following statement holds.
 
 \begin{proposition}\label{nnp1}
Let $X$ be a second-countable locally compact Abelian group that contains 
no subgroups topologically isomorphic to the circle group $\mathbb{T}$ and 
has a finite topological automorphism group $\operatorname{Aut}(X)$.
Let $\alpha_j, \beta_j$, $j = 1, 2, \dots$, be topological automorphisms of
$X$, and let $\xi_j$ be independent random variables taking values in
$X$  with distributions $\mu_j$.
Assume that the following conditions hold:
\renewcommand{\labelenumi}{\rm(\roman{enumi})}
\begin{enumerate}

\item

$L_1=\sum\limits_{j=1}^\infty\alpha_j\xi_j$ and $L_2=\sum\limits_{j=1}^\infty\beta_j\xi_j$;

\item

the linear forms $L_1$ and $L_2$ are independent;

\item	

the characteristic functions $\widehat\mu_{L_1}(y)$ and $\widehat\mu_{L_2}(y)$ do not vanish.
  
\end{enumerate}
Then all $\mu_j$ are Gaussian distributions.
\end{proposition}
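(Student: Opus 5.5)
We follow the scheme of the proof of Proposition~\ref{np1}, with the finiteness of $\operatorname{Aut}(X)$ replacing condition~(iv). The plan has four steps.

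\emph{Step 1 (normalization).} Replacing $\xi_j$ by $\kappa_j=\alpha_j\xi_j$, we may assume $\alpha_j=I$ for all $j$. By Lemma~\ref{l1}, the characteristic functions satisfy equation~(\ref{e5}), and the infinite products converge uniformly on compact subsets of $Y^2$.

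\emph{Step 2 (symmetrization and grouping).} Set $\nu_j=\mu_j*\bar\mu_j$, so that $\widehat\nu_j=|\widehat\mu_j|^2\ge0$. These functions again satisfy (\ref{e5}). Let $\zeta_j$ be independent with distributions $\nu_j$. The series $N_1=\sum\zeta_j$ and $N_2=\sum\beta_j\zeta_j$ converge in distribution, their characteristic functions are $|\widehat\mu_{L_1}|^2$ and $|\widehat\mu_{L_2}|^2$, and by Lemma~\ref{l1} $N_1$ and $N_2$ are independent.

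Since $\operatorname{Aut}(X)$ is finite, only finitely many distinct automorphisms occur among the $\beta_j$; call them $\beta_1,\dots,\beta_n$ after renumbering. Group the $\zeta_j$ as in Step~1 of Theorem~\ref{th5} into families $\zeta_{l,k}$ sharing the coefficient $\beta_k$, and put $\eta_k=\sum_l\zeta_{l,k}$. Then $N_1=\sum_{k=1}^n\eta_k$ and $N_2=\sum_{k=1}^n\beta_k\eta_k$.

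\emph{Step 3 (the main obstacle: justifying the grouping).} We must show that each subseries $\sum_l\zeta_{l,k}$ converges in distribution and that the $\eta_k$ are independent. I would argue on characteristic functions. Since $0\le\widehat\nu_j\le1$ and $\prod_j\widehat\nu_j(y)=|\widehat\mu_{L_1}(y)|^2>0$ by condition~(iii), every subproduct $\prod_l\widehat\nu_{l,k}(y)$ is a monotone limit, bounded below by the full product. Hence it converges pointwise to a function that is positive everywhere.

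For continuity of the limit, I would use the tail estimate: on a compact set $K$, the tails $\prod_{j>N}\widehat\nu_j$ of the full product tend to $1$ uniformly on $K$. This follows because the full product converges uniformly on $K$ to a continuous function that stays bounded away from $0$ there. Since each tail of a subproduct lies between the corresponding full-product tail and $1$, the subproducts also converge uniformly on compacta. By the L\'evy continuity theorem on locally compact Abelian groups, the limit is then the characteristic function of a distribution, namely that of $\eta_k$.

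Independence of $\eta_1,\dots,\eta_n$ holds because they are built from disjoint families of independent variables. Moreover $\widehat\mu_{\eta_k}(y)>0$ for all $y\in Y$, so the $\eta_k$ have nonvanishing characteristic functions.

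\emph{Step 4 (conclusion).} When $n\ge2$, Lemma~\ref{s10.2} applied to the independent forms $N_1,N_2$ of the finitely many $\eta_k$ shows that all $\eta_k$ are Gaussian. When $n=1$, we have $N_2=\beta_1N_1$ independent of $N_1$. This forces $|\widehat\mu_{N_1}(u+\widetilde\beta_1v)|=|\widehat\mu_{N_1}(u)||\widehat\mu_{N_1}(\widetilde\beta_1 v)|$, and taking $u=-\widetilde\beta_1 v$ gives $|\widehat\mu_{N_1}|\equiv1$. So $N_1$ is degenerate, hence Gaussian.

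Each $\nu_j$ is a convolution factor of the Gaussian distribution of the corresponding $\eta_k$, since $\eta_k=\zeta_{l,k}+\sum_{l'\ne l}\zeta_{l',k}$ with both sums convergent by Step~3. Lemma~\ref{ns10.2} then shows that each $\nu_j$ is Gaussian. Applying Lemma~\ref{ns10.2} once more to $\nu_j=\mu_j*\bar\mu_j$ gives that every $\mu_j$ is Gaussian.
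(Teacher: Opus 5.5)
Your proposal is correct and follows essentially the same route as the paper: normalize to $\alpha_j=I$, pass to $\nu_j=\mu_j*\bar\mu_j$, group the terms by the finitely many distinct $\beta_j$, apply Lemma~\ref{s10.2} to the resulting finitely many $\eta_k$, then apply Lemma~\ref{ns10.2} twice. Your Step~3 proof that the subseries converge (using monotonicity together with condition~(iii) to get uniform convergence on compacta) and your separate handling of the case $n=1$ fill in details that the paper leaves implicit.
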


\noindent\textbf{Acknowledgements} I would like to express my sincere gratitude 
to the referee for a careful and thorough reading of the manuscript. The referee's 
valuable comments and suggestions have helped me to significantly improve 
the presentation of the results in the article.

\bigskip

\noindent{\Large\bfseries Declarations}

\bigskip

\noindent\textbf{Funding} The author has not received any funding for this research.

\bigskip

\noindent\textbf{Conflict of interests} The author states that 
there is no conflict of interest.

\bigskip

\noindent\textbf{Ethical Approval} Not applicable.

\bigskip

\noindent\textbf{Data Availability Statement} Data sharing is not applicable to this article 
as no datasets were generated or analysed during the current study.

\medskip

\noindent B. Verkin Institute for Low Temperature Physics and Engineering\\
of the National Academy of Sciences of Ukraine\\
47, Nauky ave, Kharkiv, 61103, Ukraine

\medskip

\noindent e-mail:    gennadiy\_f@yahoo.co.uk

\end{document}